\newlength{\defbaselineskip}
\newcommand{\setlinespacing}[1]%
           {\setlength{\baselineskip}{#1 \defbaselineskip}}
\theoremstyle{plain}
\newtheorem{thm}{Theorem}[section]
\newtheorem{cor}[thm]{Corollary}
\newtheorem{lem}[thm]{Lemma}
\newtheorem{pro}[thm]{Problem}
\theoremstyle{definition}
\newtheorem{ass}{Assumption}[section]
\newtheorem{rmk}{Remark}[section]
\newcommand{\la}{\langle}
\newcommand{\ra}{\rangle}
\makeatletter\@addtoreset{equation}{section} \makeatother
\begin{document}
\title{ General Linear Quadratic Optimal Stochastic Control Problem Driven
by a Brownian Motion and a Poisson Random Martingale Measure with Random Coefficients  \thanks{
 This work is partially supported by the National
Basic Research Program of China (973 Program) (Grant
No.2007CB814904), the National Natural Science Foundation of China
(Grants No.10325101, 11071069), the Specialized Research Fund for
the Doctoral Program of Higher Education of China (Grant
No.20090071120002) and the Innovation Team Foundation of the
Department of Education of Zhejiang Province (Grant No.T200924)..}}

\date{}

\author{Qingxin Meng$$\hspace{1cm}
\\ \small{School of Mathematical Sciences, Fudan University,
Shanghai 200433, China}
\\ \small{Email: 071018034@fudan.edu.cn}}

\maketitle

\begin{abstract}
Consider 
the minimization
of the following quadratic 
functional 

\begin{equation*}\label{eq:2.2}
J(u)=\displaystyle E\int_0^T\big[\langle Q_tX_t, X_t\rangle dt +\langle N_tu_t,
u_t\rangle \big]dt+E\langle MX_T,  X_T\rangle ,
\end{equation*}
where $X$ is the strong  solution to the linear 
state equation driven by a multidimensional Browinan
motion $W$ and a Poisson random martingale measure $\tilde \mu (d\theta,dt)$

\begin{equation*}\label{eq:1.1}
\left\{\begin {array}{ll}

  dX_t=&(A_tX_t+B_tu_t)dt+\displaystyle\sum_{i=1}^d(C_t^{i}X_t+D_t^{i}u_t)dW^{i}_t
  \\&+\displaystyle
  \int_{ Z}(E_t(\theta)X_{t-}+F_t(\theta)u_t)\tilde{\mu}(d\theta,
  dt),
   \\x_0~=&x.
   
   \end {array}
\right. \end{equation*}

Here $u$ is a square integrable adapted control process. The 
problem is conventionally 
called the stochastic 
linear quadratic (LQ in short form)
optimal control problem. This paper is 
concerned the 
following general 
case:
the coefficients $A,B,C^i, D^i, E, F, Q, N$ and $M$
are allowed to be predictable processes or random matrices.
Associated with this LQ problem, the corresponding 
Riccati equation is a multidimensional backward 
stochastic differential equation driven by 
the Brownian motion $W$ and the 
Poisson random martingale measure
$\tilde \mu(d\theta,dt)$ (see \eqref{eq:4.9}).
The  backward stochastic Riccati  differential equation with jumps
will be  abbreviated as BSRDEJ. The generator of  
BSRDEJ is highly nonlinear in the the three
unknown variables $K, L$ and $H$ (see \eqref{eq:4.9}).

In the paper, we will establish the
 connections of the multidimensional BSRDEJ to the
 stochastic
 LQ problem and to the associated Hamilton systems.
 By the connections, we show the optimal control have
 the state feedback representation.
 Moreover,
 we will show the existence and uniqueness  result of the  multidimensional
 BSRDEJ
 for the case where the generator is  bounded linear dependence with respect to
  the unknown  martingale term $ L$ and $H$.

\end{abstract}

\textbf{Keywords}: Poisson random martingale measure, Linear quadratic optimal stochastic control, Random coefficients Dynamic programming, It\^{o}-ventzell formula,  Riccati equation, Backward stochastic differential 
equations, Stochastic Hamilton system

\maketitle

\section{Introduction}
  Linear Quadratic (LQ in short form) optimal control problem is
 is a problem where the
system dynamics are linear in state and control variables and the cost functional is
quadratic in the two variables.  It is
well known that LQ problem  is one of the most important classes of optimal control problem,
and the solution of this problem has had a profound impact on many engineering
applications and mathematical finance.

   The very first attempt in tracking deterministic
   LQ problem was made by Bellman, Glicksberg and Gross \cite{BGG}in 1958.
   However, Kalman\cite{KALM60} has been wildly credited for his pioneering
   work published in 1960, in solving the problem  in a linear state
   feedback control form.  Since then, the
   problem has been
   extensively studied and developed in major
   research field in control theory.
  Extension to stochastic
LQ control was first carried out by Wonham \cite{WONH68}.    Bismut \cite{Bism762} performed a detailed
analysis for stochastic LQ control with random coefficients. With the joint effort of
many researchers in the last 50 years, there has been an enormously rich theory on
LQ control, deterministic and stochastic alike (see \cite{RAZH},\cite{CLZ98},\cite{CSYJ01},\cite{ChZh00},\cite{KoZh00},\cite{HuZh05},\cite{YaZhZh04}).

  One of the elegant features of the LQ theory is that it is able to give in explicit
forms the optimal state feedback control and the optimal cost value through the
celebrated Riccati equation.  Associated with deterministic LQ problem or stochastic
LQ problem with deterministic coefficients, the corresponding
Riccati equation is backward deterministic ordinary  differential equation.
For the deterministic Riccati equation was essentially solved by Wonhan \cite{WONH68} by
applying Bellman's principle of
quasilinearizatin (see Bellman\cite{Bell55}) and a monotone convergence result of
symmetric matrices.

 But associated with stochastic LQ problem with random coefficients, the
 corresponding Riccati equation
 is a  highly nonlinear backward stochastic differential equations
 where the generator depends on the  unknown variable in quadratic way.
This  sort of Riccati equation  is called backward stochastic Riccati  equation (BSRDE in short form).
 The interest of proving existence and uniqueness results for such a class of equations
was first addressed by Bismut in \cite{Bism762}. It was clear from the beginning that to
study those highly nonlinear backward stochastic differential equation (BSDE in short form) was already a
challenging task and turned  out to become a long-standing problem. The difficulty comes essentially from the fact that, in its general formulation,
the BSRDE involves quadratic terms in both the unknowns (in particular in the
so-called martingale term). Moreover the nonlinearity can be well defined only in a
subset of the space of nonnegative matrices (where the equation naturally exists).

 For the special case that the generator of BSRDE  depends on
the unknowns martingale term only in linear way,   Bismut\cite{Bism762} obtained the
existence and uniqueness result by constructing a contraction mapping
and the using a fixed point theorem and in 1992, Peng\cite{Peng92a}
also gave a nice treatment on the proof of existence and
uniqueness  by using Bellman's linearization and a monotone
convergence result of symmetric matrices-a generalization of Wonham's
approach to the random situation.
 Later Kohlmann and Tang  have made some progress towards
 solving the open problem. See \cite{KoTa03, KoTa2003} and the references therein. However it
 is still far from  the complete solution. Until 2003, by the methods of stochastic flows, Tang \cite{Tang2003}
  solved the
 long standing open problem of
 the proof of the existence and uniqueness
of the solution of the BSRDE in the general case corresponding to a linear quadratic problem with random coefficients and state-and control-dependent noise.
In this work\cite{Tang2003}, Tang  provides a rigorous derivation
between the Riccati equation and the stochastic
Hamilton system as two different
but equivalent tools for the stochastic LQ problem.

  For the discontinuous LQ problem, in 2003,
Wu and Wang \cite{WuWa03} discussed the stochastic LQ problem  with the
system driven by Brownian motion  and Poisson jumps
and obtain the existence and uniqueness result of a class of deterministic
Riccati equation.
And in 2008, Hu and {\O}ksendal \cite{HuOk} studied
the stochastic LQ problem for the one-dimensional
case with Poisson jumps and random coefficients under partial information,
and the main result is to
show the optimal control
has  state feedback representation by an one-dimensional
BSRDE with jumps  in view of the
technique
  of completing squares. But in \cite{HuOk},
the author did not discussed the
existence and uniqueness of the solution to BSRDE with jumps.

So for the LQ problem with jumps, it
 is still far from  the complete solution.
  The main purpose of this paper is to   discuss detailed  the stochastic LQ control problem
  with random coefficients
  where the linear system is a multidimensional stochastic differential
  equation driven by
 a multidimensional Brownian
 motion and a Poisson random martingale measure.
 In the paper, we will establish the
 connections of the multidimensional Backward stochastic
 Riccati equation with jumps (BSRDEJ in short form) to the
 stochastic
 LQ problem and to the associated Hamilton systems.
 By the connections, we show the optimal control have
 the state feedback representation.
 Moreover,
 we will show the existence and uniqueness  result of the  multidimensional
 BSRDEJ
 for the case where the generator is  bounded linear dependence with respect to
  the unknowns  martingale term.

The rest of the paper is organized as follows. In section 2
we introduce useful notation
and some existing results on stochastic differential equations (SDEs in short form) and BSDEs driven
by Poission random martingale measure. In section 3, we state the stochastic LQ problem we study,
give needed assumptions and prove some preliminary property on the functional cost.
Moreover, we have showed the stochastic  LQ problem with jumps has a unique optimal control.
In section 4,  we  establish   the dual characterization of the optimal
control  by stochastic  Hamilton system.
In section 5, we will present the main results. 
In this section, we will introduce BSRDEJ  and establish the link
with the stochastic Hamilton system with jumps, then show the optimal control of the stochastic LQ problem has
 state feedback representation. In the end, we will focus on discussing
 the existence and uniqueness of the solution to BSRDEJ.

\section{Notation and Preliminaries }

~~~~Throughout this paper, we let
$(\Omega,{\cal F}, \{{\cal F}_t\}_{t\geq 0}, P)$ be a complete
filtered probability space. In this probability space, there is  a
d-dimensional standard Brownian motion  $\{{W_t}\}_{t\geq 0}$ and
a stationary Poisson point process $\{\eta_t\}_{t\geq 0}$
defined  on a fixed
nonempty measurable subset ${Z}$ of $R^1$. We denote by$\mu(de,dt)$
 the counting measure induced by $\{\eta_t\}_{t\geq 0}$ and by  $\nu(d\theta)$ the corresponding
 characteristic measure.  Furthermore,  We assume that
$\nu({Z})<\infty$. Then the compensate random martingale measure is denoted by
  $\tilde{\mu}(d\theta, dt):={\mu}(d\theta,
dt)-\nu(d\theta)dt.$   We  can assume that $\{{\cal
F}_t\}_{t\geq 0}$ is the P-augmentation of the natural filtration
generated by $\{{W_t}\}_{t\geq 0}$ and $\{\eta_t\}_{t\geq 0}$. Denote by $\mathscr{P}$  the
predictable sub-$\sigma$ field of $\mathscr B([0, T])\times
\mathscr{F}$, then we introduce the following notation used
throughout this paper.

$\bullet$~~$H$: a Hilbert space with norm $\|\cdot\|_H$.

$\bullet$~~$\langle\alpha,\beta\rangle:$ the inner product in
$\mathbb{R}^n, \forall \alpha,\beta\in\mathbb{R}^n.$

$\bullet$~~$|\alpha|=\sqrt{\langle\alpha,\alpha\rangle}:$ the norm
of $\mathbb{R}^n,\forall \alpha\in\mathbb{R}^n.$

$\bullet$~~$\langle A,B\rangle=tr(AB^T):$ the inner product in
$\mathbb{R}^{n\times m},\forall A,B\in \mathbb{R}^{n\times m}.$

$\bullet$~~$|A|=\sqrt{tr(AA^*)}:$ the norm of $\mathbb{R}^{n\times
m},\forall A\in \mathbb{R}^{n\times m}$.
Here we denote by $A^*$,  the transpose of a matrix A.

$\bullet$~~$S^n:$the set of all $n\times n$ symmetric matrices.

$\bullet$~~ $S^n_+:$ the subset of all non-negative definite matrices of $S^n.$

$\bullet$~~$(S^n)^l:$ $=\underbrace{S^n\times\cdots\times S^n}_l$.

$\bullet$~~$S_{\mathscr{F}}^2(0,T;H):$ the space of all $H$-valued
and ${\mathscr{F}}_t$-adapted  c\`{a}dl\`{a}g processes
$f=\{f(t,\omega),\ (t,\omega)\in[0,T]\times\Omega\}$ satisfying
$$\|f\|_{S_{\mathscr{F}}^2(0,T;H)}\triangleq\sqrt{E\displaystyle\sup_{0 \leq t \leq T}\|f(t)\|_H^2dt}<+\infty.$$

$\bullet$~~$\cal L_{\mathscr{F}}^2(0,T;H):$ the space of all $H$-valued
and ${\mathscr{F}}_t$-adapted processes $f=\{f(t,\omega),\
(t,\omega)\in[0,T]\times\Omega\}$ satisfying
$$\|f\|_{\cal L_{\mathscr{F}}^2(0,T;H)}\triangleq\sqrt{E\displaystyle\int_0^T\|f(t)\|_H^2dt}<\infty.$$

$\bullet$~~${\cal L}^{\nu,2}( Z; H):$ the space of H-valued measurable
  functions $r=\{r(\theta), \theta \in Z\}$ defined on the measure space $(Z, \mathscr B(Z); v)$ satisfying
$$\|r\|_{{\cal L}^{\nu,2}( Z; H)}\triangleq\sqrt{\displaystyle\int_Z\|r(\theta)\|_H^2v(d\theta)}<~\infty.$$

 $\bullet$~~ ${\cal L}_\cal{F}^{\nu,2}{([0,T]\times  Z; H)}:$ the  space of ${\cal L}^{\nu,2}( Z; H)$-valued
and ${\cal F}_t$-predictable processes $r=\{r(t,\omega,\theta),\
(t,\omega)\in[0,T]\times\Omega\times Z\}$ satisfying
$$\|r\|_{{\cal L}_\cal{F}^{\nu,2}([0,T]\times  Z; H)}\triangleq\sqrt{E\displaystyle
\iint_{Z\times (0, T]}\|r(t,\theta)\|_H^2v (d\theta)dt}<~\infty.$$

$\bullet$~~$L^2(\Omega,{\cal {F}},P;H):$ the space of all
$H$-valued random variables $\xi$ on $(\Omega,{\cal {F}},P)$
satisfying
$$\|\xi\|_{L^2(\Omega,{\cal {F}},P;H)}\triangleq E\|\xi\|_H^2<\infty.$$

Now we give two preliminary lemmas about SDE  and BSDE
driven by the $d$-dimensional Brownian motion
 $W_t$ and the Poisson random
 martingale measure $\tilde \mu(d\theta, dt).$
 which will often
been used in this paper.

\begin{lem}\label{lem:b1}
Let $a$ an ${\cal F}_0$-measurable random variable and
\begin{equation*}
\begin{array}{rl}
b:&[0,T]\times\Omega\times
R^n\longrightarrow R^n,\\
\sigma:&[0,T]\times \Omega\times
R^n\longrightarrow  R^{n\times m },\\
\pi:&[0,T]\times \Omega\times Z\times
R^n\longrightarrow R^n\\
\end{array}
\end{equation*}
are given mappings satisfying the following assumptions
\\(i)$b, \sigma$ and  $\pi$ are measurable with respect to ${\mathscr P}\times
\mathscr{B}(R^n)/\mathscr{B}(R^n), {\mathscr P}\times
\mathscr{B}(R^n)/\mathscr{B}(R^{n\times d}), {\mathscr P}\times
\mathscr{B}( Z)\times \mathscr{B}(R^n)/\mathscr{B}(R^n)$
respectively.
\\(ii) $b(\cdot, 0)\in {\cal L}_{\cal F}^2(0, T;R^n)$; $\sigma(\cdot,
0)\in {\cal L}_{\cal F}^2(0, T;R^{n\times d})$; $ \pi(\cdot, \cdot,
0)\in {\cal L}_{\cal F}^{\nu,  2}([0,  T]\times  Z,
R^{n}).$
\\(iii) $b,\sigma$ and $\pi$ are uniformly Lipschitz  continuous w.r.t.
$x$, i.e. there exists a constant  $C>0$ s.t. for all
$(t,x,\bar{x})\in [0, T]\times \mathbb{R}^n\times \mathbb{R}^n$ and
a.s. $\omega\in\Omega$,
\begin{eqnarray}
\begin{split}\label{eq:b5}
&|b(t, x)-b(t, \bar{x})|^2+|\sigma(t, x)-\sigma(t,  \bar{x})|^2
\\&~~~~~+\int_Z|\pi(t, \theta,  x)
-\pi(t, \theta, \bar{x})|^2\nu (d\theta)\leqslant C|x-\bar{x}|^2.
\end{split}
\end{eqnarray}

Then the SDE with jumps

\begin{eqnarray}
\label{eq:f35}
\begin{split}
  X_t=a+\int_0^tb(s, X_s)ds+\int_0^t\sigma(s, X_s)dW_s+\iint_{Z\times (0,T]
}\pi(s, \theta, X_{s-})\tilde{\mu}(d\theta, ds)
\end{split}
\end{eqnarray}
 has a unique
solution $X\in S_{\cal F}^2(0, T;R^n). $  Moreover, the following
a priori estimate  holds
\begin{eqnarray}\label{eq:f36}
\begin{split}
E\sup_{0\leqslant t\leqslant T}|X_t|^2\leqslant K&\bigg [ E\int_0^T
|b(t, 0)|^2dt+E\int_0^T |\sigma(t, 0)|^2dt\\
&+E\iint_{ Z\times (0, T]}|\pi(t, \theta,
0)|^2\nu(d\theta)dt+E|a|^2\bigg],
\end{split}
\end{eqnarray}
\end{lem}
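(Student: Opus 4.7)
The plan is to establish existence and uniqueness by a Banach fixed point argument in $S^2_{\mathcal F}(0,T;\mathbb R^n)$ equipped with an equivalent weighted norm, and then to derive the a priori estimate \eqref{eq:f36} via It\^o's formula for c\`adl\`ag semimartingales combined with Gr\"onwall's inequality and the Burkholder-Davis-Gundy (BDG) inequalities for Brownian and compensated Poisson stochastic integrals.

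For existence and uniqueness, I would define the solution map
\begin{equation*}
\Phi(X)_t := a + \int_0^t b(s,X_s)\,ds + \int_0^t \sigma(s,X_s)\,dW_s + \iint_{Z\times(0,t]}\pi(s,\theta,X_{s-})\,\tilde\mu(d\theta,ds),
\end{equation*}
and verify that $\Phi$ maps $S^2_{\mathcal F}(0,T;\mathbb R^n)$ into itself by applying Doob's inequality, the $L^2$-isometry for compensated Poisson martingales, and the linear growth inherited from assumptions (ii) and (iii). To obtain contraction, I would introduce the equivalent norm $\|X\|_\beta^2 := E\sup_{0\le t\le T} e^{-\beta t}|X_t|^2$. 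For two inputs $X,\bar X$, applying BDG (for the Brownian part), the Poisson martingale isometry (combined with Doob for the $\sup$), and the Lipschitz bound \eqref{eq:b5}, followed by Fubini to trade the $\sup$-in-$t$ with the $e^{-\beta t}$-weighted integrals, yields an estimate of the form $\|\Phi(X)-\Phi(\bar X)\|_\beta^2\le K(C,T)\beta^{-1}\|X-\bar X\|_\beta^2$. Taking $\beta$ sufficiently large makes $\Phi$ a strict contraction, and the Banach fixed point theorem gives the unique solution $X\in S^2_{\mathcal F}(0,T;\mathbb R^n)$.

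For the a priori estimate, I would apply It\^o's formula for c\`adl\`ag semimartingales to $|X_t|^2$: the drift piece contributes $2\int_0^t\langle X_s,b(s,X_s)\rangle ds$, the Brownian quadratic variation contributes $\int_0^t|\sigma(s,X_s)|^2 ds$, and the Poisson compensator contributes $\iint_{Z\times(0,t]}|\pi(s,\theta,X_{s-})|^2\nu(d\theta)ds$. Each of these is split by \eqref{eq:b5} into a term controlled by $|X_s|^2$ and a zero-input term. Taking $\sup_{t\le T}$ and expectations, I would estimate the martingale cross terms by BDG and Young's inequality $2ab\le\varepsilon a^2+\varepsilon^{-1}b^2$, absorbing a small multiple of $E\sup_{s\le t}|X_s|^2$ into the left-hand side. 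The result is an integral inequality for $E\sup_{s\le t}|X_s|^2$ to which Gr\"onwall's lemma applies, producing \eqref{eq:f36}.

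The main obstacle is the careful handling of the jump integral: one must check that $\pi(s,\theta,X_{s-})$ is $\mathscr P\otimes\mathscr B(Z)$-measurable and square integrable with respect to $\nu(d\theta)\,ds$, and use the correct version of BDG for compensated Poisson random measures, which controls $E\sup_{t\le T}|\iint\pi\,d\tilde\mu|^2$ by a constant times $E\iint_{Z\times(0,T]}|\pi|^2\nu(d\theta)ds$ rather than by a pathwise quadratic-variation expression. Once these jump-specific points are in place, the Brownian and jump contributions enter symmetrically and the standard Picard/Gr\"onwall machinery closes the argument.
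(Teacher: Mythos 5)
The paper does not actually prove this lemma: it is stated as a preliminary, and the analogous existence--uniqueness result for the companion BSDE lemma is simply cited to Tang and Li \cite{Tali94}, so there is no in-paper argument to compare against. Your proposal supplies the classical proof, and its overall architecture (Picard/contraction for existence and uniqueness; It\^{o}'s formula for $|X_t|^2$ plus BDG, Young and Gr\"onwall for the a priori bound, with the jump compensator contributing exactly the $\iint|\pi|^2\nu(d\theta)ds$ term) is correct and is what the cited literature does. One technical point deserves care: the contraction estimate $\|\Phi(X)-\Phi(\bar X)\|_\beta^2\le K\beta^{-1}\|X-\bar X\|_\beta^2$ in the norm $E\sup_{t\le T}e^{-\beta t}|X_t|^2$ does not follow by a plain Fubini argument for the two martingale terms, because Doob/BDG control $E\sup_t|M_t|^2$ by the unweighted $E\int_0^T|\Delta\sigma_s|^2ds$ and the time-dependent weight $e^{-\beta t}$ cannot be pulled inside the supremum of the martingale; the $\beta^{-1}$ gain via Fubini is available only after the isometries, i.e.\ for the norm $E\int_0^Te^{-\beta t}|X_t|^2dt$. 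The standard fixes are either to run the fixed point in the weighted $\mathcal{L}^2_{\mathcal F}(0,T)$ norm and then upgrade the fixed point to $S^2_{\mathcal F}$ a posteriori (using exactly the estimate you derive in your second paragraph), or to iterate Picard directly and obtain the $(KT)^n/n!$ decay. With that adjustment, and with the measurability and square-integrability of $s\mapsto\pi(s,\theta,X_{s-})$ checked as you indicate, your argument closes.
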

where  $K$  is a positive  constant depending only on Lipschitz  constant $C$ and $T$.

\begin{lem}\label{lem:b2}
Let $\xi$ an ${\cal F}_T$-measurable random variable and
\begin{equation}
\begin{array}{rl}
f:[0, T]\times \Omega\times R^n\times R^{n\times d}\times {\cal
L}^{\nu, 2}( Z; R^n)\longrightarrow R^n
\end{array}
\end{equation}
is  a given mapping satisfying the following assumptions
\\(i) $f$ is measurable with respect to $\mathscr {P}\times
\mathscr{B}(R^n)\times\mathscr{B}(R^{n\times d})\times \mathscr {B}({\cal L}^{\nu, 2}( Z; R^n))/\mathscr{B}(R^n)$
\\(ii) $f(\cdot, 0, 0, 0)\in {\cal L}_{\cal F}^2(0, T;R^n)$; $\xi\in
{\cal L}^2(\Omega,  {\cal F}, P; R^n).$
\\(iii) $f$ is uniformly Lipschitz continuous w.r.t. $(y,q,r)$, i.e.
there exists a constant $C>0$ s.t. for all
$(t,y,q,r,\bar{y},\bar{q}, \bar{r})\in [0, T]\times
\mathbb{R}^n\times \mathbb{R}^{n\times d}\times
{\cal L}^{\nu, 2}( Z; R^n)\times \mathbb{R}^n\times\mathbb{R}^{n\times
d}\times {\cal L}^{\nu, 2}( Z; R^n)$ and a.s. $\omega\in\Omega$,
\begin{eqnarray}\label{eq:f37}
\begin{split}
|f(t, &y, q, r)-f(t, \bar{y}, \bar{q}, \bar{r})|^2
\\&\leqslant
C\bigg[|y-\bar{y}|^2+|q-\bar{q}|^2 +\int_{ Z
}|r(\theta)-\bar{r}(\theta)|^2 \nu(d\theta)\bigg].
\end{split}
\end{eqnarray} Then the BSDE with jumps
\begin{eqnarray}\label{eq:f38}
\begin{split}
  Y_t =\xi+\int_t^Tf(s, Y_s, Q_s, R_s)ds-\int_t^T Q_sdW_s
 -\iint_{ Z\times (t,  T]}R_s(\theta)\tilde{\mu}(d\theta,  ds)\\
\end{split}
\end{eqnarray}
has a unique solution $$(Y,  Q,  R)\in S_{\cal F}^2(0, T;R^n)\times
{\cal L}_{\cal F}^2(0,  T; R^{n\times d })\times {\cal L}_{\cal
F}^{\nu, 2}([0, T]\times Z; R^n).$$ Moreover, we have the
following a priori  estimate

\begin{eqnarray}\label{eq:b11}
\begin{split}
&~~~E\sup_{0\leqslant t\leqslant
T}|Y_t|^2+E\int_0^T|Q_t|^2dt+E\iint_{ Z\times (0,  T]}|R_t(\theta)|^2 \nu (d\theta)dt\\
&\leqslant K \bigg[E\int_0^T |f(t, 0, 0, 0)|^2dt+E|\xi|^2\bigg],
\end{split}
\end{eqnarray}
where  $K$  is a positive  constant depending only on $C$ and $T$.

Particularly, if
\begin{equation}
\label{eq:f310}
M:=\displaystyle\sup_{\omega\in\Omega}\bigg[\int_0^T|f(t,\omega,  0, 0,
0)|^2dt+|\xi(\omega)|^2\bigg]<\infty,
\end{equation} then  for all $t\in [0, T]$ and a.s., we have
\begin{equation}
\label{eq:f311} |Y_t|^2<M\cdot e^{KT},
\end{equation} where $K$ is
 a positive constant depending only on  Lipschitz constant $C.$
 \end{lem}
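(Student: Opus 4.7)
The plan is to prove existence and uniqueness by a Banach fixed point argument on the product Banach space
$$\mathcal{B} := \mathcal{L}_{\mathcal{F}}^2(0,T;R^n)\times \mathcal{L}_{\mathcal{F}}^2(0,T;R^{n\times d})\times \mathcal{L}_{\mathcal{F}}^{\nu,2}([0,T]\times Z;R^n),$$
equipped with a weighted norm $\|(y,q,r)\|_\beta^2 := E\int_0^T e^{\beta s}(|y_s|^2+|q_s|^2+\|r_s\|_{\mathcal{L}^{\nu,2}}^2)\,ds$ for a sufficiently large $\beta>0$. Given $(y,q,r)\in\mathcal{B}$, assumption (iii) combined with (ii) shows $F_\cdot := f(\cdot,y_\cdot,q_\cdot,r_\cdot)\in\mathcal{L}_{\mathcal{F}}^2(0,T;R^n)$, so $\eta := \xi+\int_0^T F_s\,ds\in L^2(\Omega,\mathcal{F}_T,P;R^n)$. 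Since $\{\mathcal{F}_t\}$ is the augmentation of the filtration generated by $W$ and $\eta_\cdot$, the Kunita--Watanabe/Ikeda--Watanabe martingale representation theorem gives a pair $(Q,R)$ in $\mathcal{L}_{\mathcal{F}}^2(0,T;R^{n\times d})\times\mathcal{L}_{\mathcal{F}}^{\nu,2}([0,T]\times Z;R^n)$ such that the càdlàg martingale $M_t := E[\eta\,|\,\mathcal{F}_t]$ satisfies $M_t = M_0 + \int_0^t Q_s\,dW_s + \iint_{Z\times(0,t]} R_s(\theta)\tilde\mu(d\theta,ds)$. Setting $Y_t := E[\xi+\int_t^T F_s\,ds\,|\,\mathcal{F}_t]$, the triple $(Y,Q,R)$ solves the BSDE \eqref{eq:f38} with driver $F$, which defines the map $\Phi:(y,q,r)\mapsto (Y,Q,R)$ on $\mathcal{B}$.

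To show $\Phi$ is a contraction, take two inputs and write $(\tilde Y,\tilde Q,\tilde R)$ for the difference of images and $(\tilde y,\tilde q,\tilde r)$ for the difference of inputs. Apply It\^o's formula for c\`adl\`ag semimartingales to $e^{\beta t}|\tilde Y_t|^2$ on $[t,T]$, which produces a bracket contribution $\int_t^T e^{\beta s}(|\tilde Q_s|^2\,ds + \int_Z|\tilde R_s(\theta)|^2\nu(d\theta)\,ds)$ from the $W$- and $\tilde\mu$-integrals, the compensated jump correction arising because $\Delta \tilde Y_s = \tilde R_s(\theta)$ at a jump of $\mu$. Take expectation (the martingale integrals vanish), use the Lipschitz bound \eqref{eq:f37} on the cross term $2\langle \tilde Y_{s-},f(\cdot,y,q,r)-f(\cdot,\bar y,\bar q,\bar r)\rangle$, and apply Young's inequality to absorb half the $|\tilde Q|^2,\|\tilde R\|^2$ terms into the left. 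Choosing $\beta$ large enough in terms of $C$ then gives $\|\Phi(y,q,r)-\Phi(\bar y,\bar q,\bar r)\|_\beta \le \tfrac12 \|(y,q,r)-(\bar y,\bar q,\bar r)\|_\beta$, hence a unique fixed point. That $Y$ actually lies in $S_{\mathcal{F}}^2$ follows from Doob's inequality applied to the martingale $M$ together with BDG for the stochastic integrals.

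The a priori estimate \eqref{eq:b11} comes from the same It\^o computation, now applied to $|Y|^2$ itself on $[0,T]$: rearranging to put $\int_0^T|Q_s|^2ds$ and $\iint|R_s(\theta)|^2\nu(d\theta)ds$ on the left, estimating $2\langle Y_{s-},f(s,Y_s,Q_s,R_s)\rangle$ via Lipschitz and $|f(s,0,0,0)|$, Young's inequality to absorb the $|Q|^2,\|R\|^2$ factors, then taking expectation, applying Gronwall, and finally using BDG to move $E\sup_t$ inside. For the pointwise bound \eqref{eq:f311}, fix $t$, apply It\^o to $|Y|^2$ on $[t,T]$, and take conditional expectation given $\mathcal{F}_t$; the compensated integrals against $W$ and $\tilde\mu$ vanish, yielding
\begin{equation*}
|Y_t|^2 + E\!\left[\int_t^T\!|Q_s|^2ds+\!\iint_{Z\times(t,T]}\!|R_s(\theta)|^2\nu(d\theta)ds\,\Big|\,\mathcal{F}_t\right] = E\!\left[|\xi|^2+2\!\int_t^T\!\langle Y_{s-},f\rangle ds\,\Big|\,\mathcal{F}_t\right].
\end{equation*}
Bounding $2\langle Y_s,f\rangle \le (1+2C+8C^2)|Y_s|^2 + |f(s,0,0,0)|^2 + \tfrac14|Q_s|^2+\tfrac14\|R_s\|^2$ and using the hypothesis $|\xi|^2+\int_0^T|f(s,0,0,0)|^2ds\le M$ a.s., one obtains $|Y_t|^2 \le M + K\int_t^T\operatorname{ess\,sup}_\omega|Y_s(\omega)|^2\,ds$ a.s., and a backward Gronwall lemma gives \eqref{eq:f311}.

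The main obstacle is the bookkeeping in the It\^o formula for the c\`adl\`ag process driven jointly by $W$ and $\tilde\mu$: the jump contribution to the quadratic variation produces $\iint|R_s(\theta)|^2\mu(d\theta,ds)$, which only becomes the usable $\nu(d\theta)ds$-integral after compensation, and one must keep track of left limits $Y_{s-}$ in the cross terms. A secondary technical point is verifying that the Lipschitz hypothesis (iii), formulated with the $\mathcal{L}^{\nu,2}(Z;R^n)$-norm of $r$, is compatible with the integrand structure of $\tilde\mu$, so that the driver $s\mapsto f(s,Y_s,Q_s,R_s)$ is predictable and square integrable whenever $(Y,Q,R)$ lies in the stated product space.
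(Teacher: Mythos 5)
Your overall route coincides with the paper's: the paper simply cites Tang and Li \cite{Tali94} for existence and uniqueness (which is proved there by exactly the weighted-norm contraction plus martingale representation you describe), and it establishes the two estimates by the same It\^{o}--Lipschitz--Young--Gronwall computation you outline, so on those points your proposal is fine. The one place where your write-up has a genuine gap is the last step of the pointwise bound \eqref{eq:f311}. After taking conditional expectation you arrive at $|Y_t|^2\le M+K\int_t^T \operatorname{ess\,sup}_{\omega}|Y_s(\omega)|^2\,ds$ and invoke a backward Gronwall lemma for $g(s):=\operatorname{ess\,sup}_{\omega}|Y_s(\omega)|^2$. But Gronwall requires $g$ to be finite, or at least locally integrable, on $[0,T]$, and that is precisely what \eqref{eq:f311} is meant to deliver: the a priori information $E\sup_{t}|Y_t|^2<\infty$ from \eqref{eq:b11} gives no control whatsoever on $\operatorname{ess\,sup}_{\omega}|Y_s(\omega)|^2$, so as written the argument is circular. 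The paper sidesteps this by conditioning on ${\cal F}_r$ for a \emph{fixed} earlier time $r\le t$: the resulting inequality $E^{{\cal F}_r}|Y_t|^2\le M+K\int_t^T E^{{\cal F}_r}|Y_s|^2\,ds$ involves the function $t\mapsto E^{{\cal F}_r}|Y_t|^2(\omega)$, which for a.e.\ fixed $\omega$ is integrable on $[0,T]$ because $E\int_0^T|Y_s|^2\,ds<\infty$; Gronwall then applies pathwise, and one concludes by specializing $r=t$ (using that $Y_t$ is ${\cal F}_t$-measurable). Replacing your ess-sup step by this conditioning device, or by running the estimate along Picard iterates each of which is bounded, closes the gap; everything else in your proposal is sound.
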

\begin{proof}
 The proof of the existence and the uniqueness  can be found in \cite{Tali94}.
 In the following we  will only  proof  the estimate \eqref{eq:f311}. As for the a priori 
 estimate
 \eqref{eq:b11},
 it can be obtained similarly by Gronwall's inequality  and Burkholder-Davis-Gundy inequality.
 In fact,  for any given  $0\leq r\leq
t \leq T$,  applying It\^{o}'s formula to $|y_t|^2$ and takeing
conditional expectation with respect to $ {\cal F}_r$, we have
\begin{eqnarray}
  \begin{split}
&E^{{\cal F}_r}|Y_t|^2+E^{{\cal F}_r}\int_t^T|Q_s|^2ds
+E^{{\cal F}_r}\iint_{ Z\times (t,  T]}|R_s(\theta)|^2\nu(d\theta)ds\\
=&E^{{\cal F}_r}\int_t^T2\langle f(s, Y_s, Q_s, R_s),  Y_s\rangle ds+E^{{\cal F}_r}|\xi|^2\\
\leqslant &E^{{\cal F}_r}\int_t^T2|f(s, Y_s, Q_s, R_s)||Y_s|ds+E^{{\cal F}_r}|\xi|^2\\
\leqslant &E^{{\cal F}_r}\int_t^T2|f(s, Y_s, Q_s, R_s)-f(s,
0, 0, 0)+f(s, 0, 0, 0)||Y_s|ds
+E^{{\cal F}_r}|\xi|^2\\
\leqslant &\frac{1}{2C}E^{{\cal F}_r}\int_t^T|f(s, Y_s, Q_s,
R_s)-f(s, 0, 0, 0)|^2ds+2CE^{{\cal F}_r}\int_t^T|Y_s|^2ds
\\&~~~+E^{{\cal F}_r}\int_t^T|f(s, 0, 0, 0)|^2ds+E^{{\cal F}_r}\int_t^T|Y_s|^2ds
+E^{{\cal F}_r}|\xi|^2\\
\leqslant&(2C+\frac{3}{2})E^{{\cal
F}_r}\int_t^T|Y_s|^2ds+\frac{1}{2}E^{{\cal F}_r}\int_t^T|Q_s|^2ds
+\frac{1}{2}E^{{\cal F}_r}\iint_{ Z\times (t,
T]}|R_s(\theta)|^2\nu(d\theta)ds
\\&+E^{{\cal F}_r}\bigg[\int_t^T|f(s, 0, 0, 0)|^2ds +{{\cal F}_r}|\xi|^2\bigg],
  \end{split}
  \end{eqnarray}
 where  the Lipschitz condition \eqref{eq:f37}
  and the basic inequality  $2ab\leqslant
\beta a^2+\displaystyle\frac{1}{\beta}b^2, ~\forall \beta>0,  a> 0, b> 0
$ are used.
 Therefore, we have

\begin{eqnarray}
  \begin{split}
E^{{\cal F}_r}|Y_t|^2\leqslant &E^{{\cal F}_r}\bigg[\int_0^T|f(s, 0,
0, 0)|^2ds+|\xi|^2\bigg]+(2C+\frac{3}{2})E^{{\cal
F}_r}\int_t^T|y_s|^2ds
\\\leq &M+K\int_t^TE^{{\cal F}_r}|Y_s|^2ds,
  \end{split}
\end{eqnarray}
where we set $L=2C+\displaystyle\frac{3}{2}.$

Consequently, applying Gronwall's inequality, we get

\begin{equation}
       E^{^{{\cal F}_r}}|Y_t|^2\leqslant M e^{K(T-t)}, ~~~~~~ 0\leqslant r\leqslant t
\leqslant T.
     \end{equation}

In the end, particularly taking $r=t$, we obtain the estimate
\eqref{eq:f311}

\end{proof}

\section{Formulation of the problem  and Elementary Results}

Consider the following linear stochastic system derived by
 Brownian motion $W_t$ and Poisson random measure $\tilde{\mu}(d\theta,dt)$

\begin{equation}\label{eq:1.1}
\left\{\begin {array}{ll}

  dX_t=&(A_tX_t+B_tu_t)dt+\displaystyle\sum_{i=1}^d(C_t^{i}X_t+D_t^{i}u_t)dW^{i}_t
  \\&+\displaystyle
  \int_{ Z}(E_t(\theta)X_{t-}+F_t(\theta)u_t)\tilde{\mu}(d\theta,
  dt),
   \\x_0~=&x.

\end {array}
\right. \end{equation}

The process $u$ in  \eqref{eq:1.1}  is our control process.
An admissible control $u$ is defined as a $\{{\cal
F}_t, 0\leq t\leq T\}$-predictable process with values in $R^m$ such that
$E\displaystyle\int_0^T|u(t)|^2dt<+\infty$.  The set of all admissible
control $u$ is denoted by ${\cal A}.$  Note that ${\cal A}$ is a Hilbert
space.

And for any admissible control $u\in {\cal A},$ we consider the following
quadratic cost functional
\begin{equation}\label{eq:2.2}
J(u)=\displaystyle E\int_0^T\big[\langle Q_tX_t, X_t\rangle dt +\langle N_tu_t,
u_t\rangle \big]dt+E\langle MX_T,  X_T\rangle ,
\end{equation}
where $X$ is the strong  solution to the state equation \eqref{eq:1.1}.

Throughout this paper, we make the following assumptions on the coefficients
$A,B,C^i,D^i,\\E,F,Q,N$ and $M.$

\begin{ass}\label{ass:1.1}
 The matrix processes $A:[0, T]\times\Omega\rightarrow R^{n\times n},
B:[0, T]\times\Omega\rightarrow R^{n\times m}; C^i:[0, T]\times\Omega\rightarrow R^{n\times
n}, D^i:[0, T]\times\Omega\rightarrow R^{n\times m}, i=1, 2,
\cdots, d; E:[0, T]\times\Omega\rightarrow \cal L^{v,2}(Z; R^{n\times n}), F:[0,
T]\times\Omega\rightarrow  \cal L ^{v,2}(Z; R^{n\times m});
Q:[0, T]\times\Omega\rightarrow R^{n\times
n}, N:[0, T]\times\Omega\rightarrow R^{m\times m};$ and
the random matrix
$M:\Omega\rightarrow
R^{n\times n}$
are uniformly bounded and $\{{\cal F}_t, 0\leq t\leq T\}$-predictable or ${\cal F}_T$-measurable.
\end{ass}
\begin{ass}\label{ass:1.2}

The state weighting matrix process Q and the control weighting
matrix process N are
a.s. a.e.  symmetric and nonnegative. The terminal state weighting
random matrix M  is a.s. symmetric and
nonnegative. The control weighting matrix process N is a.s. a.e. uniformly positive,
i.e. $ N(t)\geq \delta I $ for some
positive constant $\delta$ and almost all $(t, \omega) \in [0, T]\times \Omega$.
\end{ass}

  Under Assumption \ref{ass:1.1}, from Lemma \ref{lem:b1}, the system \eqref{eq:1.1}
 admits a  unique solution strong solution, which will be denoted by
  $X^{(x,u)}$ or $X$
  if its dependence on
  admissible control $u$ is clear from  the context.
   Then we call $X$ the
state process corresponding to the control process $u$ and
 $(u; X)$ the
admissible pair. Furthermore, from Assumption \ref{ass:1.2} and the
a priori estimate \eqref{eq:f36},
it is easy to check that
$$ |J(u)|<\infty.$$

Then we can pose the  so-called linear quadratic (LQ) problem.

\begin{pro} \label{pro:2.1}
  Find an admissible control $\bar{u}$  such that
 \begin{equation} \label{eq:b7}
J(\bar{u})=\displaystyle\inf_{u\in {\cal
A}}J(u)
\end{equation}
\end{pro}
 Any  $\bar{u}\in {\cal A}$ satisfying the above is called
  an optimal control process of Problem \ref{pro:2.1}
 and the corresponding state process $\bar X$ is
 called the corresponding optimal state process. We also refer
  to $(\bar{u}; \bar X)$  as an  optimal pair of
 Problem \ref{pro:2.1}.

\begin{lem}\label{lem:b4}
Under Assumptions \ref{ass:1.1}-\ref{ass:1.2}, the cost functional $J$ is strictly convex over $\cal A$. Moreover,
 $\displaystyle\lim_ {||u||_{\cal A}{\rightarrow \infty}}J(u)=+\infty$
\end{lem}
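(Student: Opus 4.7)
The plan is to exploit the linear-quadratic structure: the state equation is linear in $(X,u)$, the three quadratic pieces of $J$ involve the nonnegative matrices $Q$ and $M$ and the uniformly positive matrix $N \geq \delta I$. Strict convexity will come from a parallelogram identity together with the uniform positivity of $N$, while coercivity will follow from simply discarding the two nonnegative state-dependent terms.

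For strict convexity, I would first observe that $u \mapsto X^{(x,u)}$ is affine. Indeed, if $X_1, X_2$ are the states associated with $u_1, u_2 \in \mathcal{A}$, then for $\lambda \in [0,1]$ the process $\lambda X_1 + (1-\lambda) X_2$ starts at $x$ and, by linearity of the coefficients in $(X,u)$, solves the state equation \eqref{eq:1.1} driven by $\lambda u_1 + (1-\lambda) u_2$; the uniqueness part of Lemma \ref{lem:b1} then identifies it with $X^{(x, \lambda u_1 + (1-\lambda) u_2)}$. Next I would apply, pointwise in $(t,\omega)$, the elementary identity
\begin{equation*}
\langle A(\lambda \alpha + (1-\lambda)\beta), \lambda \alpha + (1-\lambda)\beta\rangle = \lambda \langle A\alpha,\alpha\rangle + (1-\lambda)\langle A\beta,\beta\rangle - \lambda(1-\lambda)\langle A(\alpha-\beta), \alpha-\beta\rangle,
\end{equation*}
valid for any symmetric $A$, to $Q_t$, $N_t$ and to $M$, and then take expectation. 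Using the affine dependence above, this yields
\begin{equation*}
J(\lambda u_1 + (1-\lambda) u_2) = \lambda J(u_1) + (1-\lambda) J(u_2) - \lambda(1-\lambda)\,\Delta(u_1,u_2),
\end{equation*}
where $\Delta(u_1,u_2)$ is the ``$J$-like'' quadratic expression evaluated at $u_1-u_2$ and $X_1-X_2$. Since $Q \geq 0$, $M \geq 0$ and $N \geq \delta I$ by Assumption \ref{ass:1.2}, I obtain $\Delta(u_1,u_2) \geq \delta \|u_1 - u_2\|_{\mathcal{A}}^2$, which is strictly positive whenever $u_1 \neq u_2$ and $\lambda \in (0,1)$. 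Strict convexity follows.

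The coercivity claim is immediate: dropping the nonnegative $Q$-term and $M$-term from $J$ gives $J(u) \geq E \int_0^T \langle N_t u_t, u_t\rangle\, dt \geq \delta \|u\|_{\mathcal{A}}^2$, so $J(u) \to +\infty$ whenever $\|u\|_{\mathcal{A}} \to +\infty$. I do not expect a serious obstacle; the only points worth stating carefully are the linearity of $u \mapsto X^{(x,u)}$ (from uniqueness in Lemma \ref{lem:b1}) and the pointwise application of the parallelogram identity before integration.
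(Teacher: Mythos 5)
Your proof is correct and follows essentially the same route as the paper: convexity of $J$ via the affine dependence of the state on the control and the quadratic structure, strict convexity from the uniform positivity $N\geq \delta I$, and coercivity by discarding the nonnegative $Q$- and $M$-terms to get $J(u)\geq \delta\|u\|_{\mathcal A}^2$. The paper merely asserts convexity is ``easy to check,'' whereas you supply the details (uniqueness in Lemma \ref{lem:b1} for the affinity, and the pointwise quadratic identity), which is a welcome elaboration rather than a different argument.
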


 \begin{proof}
 Under  Assumption \ref{ass:1.2}, by the definition of cost functional
 $J$ (see \eqref{eq:2.2}),  it is easy to check that
 $J$ is a convex functional. Since  the weighting
matrix process $N$ is uniformly strictly positive, we can conclude that $J$
is strictly convex over $\cal A$. Moreover, in view of the nonnegative
 property  of $Q,M$
and the uniformly strictly positive property of $N$, we have
        $$0\geq J(u)\geq \delta E\displaystyle\int_0^T
        |u_t|^2dt=\delta||u||^2_{\cal A}.$$
Therefore, $\displaystyle\lim_ {||u||_{\cal A}{\rightarrow
\infty}}J(u)=+\infty.$
 \end{proof}

\begin{lem}\label{lem:b5}
Under Assumptions \ref{ass:1.1}-\ref{ass:1.2}, the cost functional $J$ is Fr\`{e}chet differentiable over $\cal A$. Moreover, the
corresponding
Fr\`{e}chet derivative $J'$ at any admissible control $u\in \cal A$  is given by
\begin{equation}\label{eq:2.4}
 \langle J'(u),  v \rangle=2E\int_0^T\bigg[\langle Q_tX^{(x,u)}_t,  X^{(0,v)}_t\rangle
+\langle N_tu_t, v_t\rangle \bigg]dt+2E\langle MX^{(x,u)}_T,  X^{(0,v)}_T\rangle , ~~~~\forall v\in{\cal A},
\end{equation}
where
$X^{(0,v)}$ is the solution of the SDE
\eqref{eq:1.1} corresponding to the admissible control $v$ and the initial value
$X_0=0,$ and $X^{(x,u)}$ is the state process corresponding to the control process $u.$
\end{lem}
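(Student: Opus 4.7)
The plan is to exploit the linearity of the state equation, which implies that the map $u \mapsto X^{(x,u)}$ is affine, and then expand the quadratic cost to isolate the linear and higher-order terms in the perturbation.

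First, I would verify by uniqueness in Lemma \ref{lem:b1} that for any $u,v\in\mathcal{A}$ one has the decomposition
\begin{equation*}
X^{(x,u+v)}_t = X^{(x,u)}_t + X^{(0,v)}_t, \qquad t\in[0,T],
\end{equation*}
since both sides solve the same linear SDE with the same initial datum. Substituting this decomposition into \eqref{eq:2.2} and using the symmetry of $Q_t$, $N_t$ and $M$, I would expand
\begin{equation*}
J(u+v) = J(u) + \Lambda(u)(v) + R(v),
\end{equation*}
where $\Lambda(u)(v)$ is precisely the right-hand side of \eqref{eq:2.4} and the remainder is the purely quadratic piece
\begin{equation*}
R(v) = E\int_0^T\big[\langle Q_tX^{(0,v)}_t,X^{(0,v)}_t\rangle + \langle N_tv_t,v_t\rangle\big]dt + E\langle MX^{(0,v)}_T,X^{(0,v)}_T\rangle.
\end{equation*}

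Next, I would check that $v\mapsto \Lambda(u)(v)$ is a bounded linear functional on $\mathcal{A}$. Linearity in $v$ is immediate because $v\mapsto X^{(0,v)}$ is linear. For boundedness, Cauchy--Schwarz together with Assumption \ref{ass:1.1} (boundedness of $Q,N,M$) gives
\begin{equation*}
|\Lambda(u)(v)| \leq C\Big(\|X^{(x,u)}\|_{\mathcal{L}^2_{\mathcal{F}}} + \|u\|_{\mathcal{A}}\Big)\Big(\|X^{(0,v)}\|_{\mathcal{L}^2_{\mathcal{F}}} + \|v\|_{\mathcal{A}}\Big),
\end{equation*}
and the a priori estimate \eqref{eq:f36} applied to the linear state equation yields $\|X^{(0,v)}\|_{S^2_{\mathcal{F}}} \leq K\|v\|_{\mathcal{A}}$, so $|\Lambda(u)(v)|\leq C'\|v\|_{\mathcal{A}}$ with a constant depending on $u$.

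Finally, for the remainder, the same a priori estimate \eqref{eq:f36} gives
\begin{equation*}
|R(v)| \leq C\Big(\|X^{(0,v)}\|_{\mathcal{L}^2_{\mathcal{F}}}^2 + \|v\|_{\mathcal{A}}^2 + E|X^{(0,v)}_T|^2\Big) \leq \tilde C\,\|v\|_{\mathcal{A}}^2,
\end{equation*}
so $R(v)=o(\|v\|_{\mathcal{A}})$ as $\|v\|_{\mathcal{A}}\to 0$. Combining the three facts establishes that $J$ is Fréchet differentiable at $u$ with derivative given by \eqref{eq:2.4}. There is no genuine obstacle here; the only point requiring a bit of care is the systematic use of the linearity decomposition $X^{(x,u+v)} = X^{(x,u)} + X^{(0,v)}$ and of the a priori bound \eqref{eq:f36} to control both the linear term and the quadratic remainder in the $\mathcal{A}$-norm.
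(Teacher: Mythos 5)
Your proposal is correct and follows essentially the same route as the paper: both exploit the affine decomposition $X^{(x,u+v)}=X^{(x,u)}+X^{(0,v)}$ to expand the quadratic cost, identify the candidate derivative, and bound the purely quadratic remainder by $K\|v\|_{\mathcal A}^2$ via the a priori estimate \eqref{eq:f36}. Your explicit verification that $v\mapsto\Lambda(u)(v)$ is a bounded linear functional is a small but worthwhile addition that the paper leaves implicit.
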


\begin{proof} For $\forall u , v \in{\cal A}$, we define  $$
 \Delta
J:= J(u +v )-J(u )-2E\int_0^T\big[\langle Q_tX^{(x,u)}_t,
X^{(0,v)}_t\rangle +\langle N_tu_t,  v_t\rangle \big]dt-2E\langle MX^{(x,u)}_T,  X^{(0,v)}_T\rangle ,
$$ Then from the definition  of
cost functional  $J$ (see\eqref{eq:2.2}), we have
\begin{eqnarray}
\begin{split}
\Delta J=&E\displaystyle\int_0^T \bigg[\langle Q_t(X^{(x,u)}_t+X^{(0,v)}_t),
X^{(x,u)}_t+X_t^{(0,v)}\rangle +\langle N_t(u_t+v_t),  u_t+v_t\rangle \bigg]dt \\
&+E\langle M(X^{(x,u)}_T+X^{(0,v)}_T),  X^{(x,u)}_T+X^{(0,v)}_T\rangle  -E\displaystyle\int_0^T
\bigg[\langle Q_tX^{(x,u)}_t,  X^{(x,u)}_t\rangle \\&+\langle N_tu_t,  u_t\rangle \bigg]dt-E\langle MX^{(x,u)}_T,  X^{(x,u)}_T\rangle -2E\int_0^T\bigg[\langle N_tX^{(x,u)}_t,  X^{(0,v)}_t\rangle
\\&+\langle Q_tu_t,  v_t\rangle \bigg]dt-2E\langle MX^{(x,u)}_T,  X^{(0,v)}_T\rangle \\
=&2E\displaystyle\int_0^T \bigg[\langle Q_tX^{(0,v)}_t,  X^{(0,v)}_t\rangle +\langle N_tv_t,
v_t\rangle \bigg]dt +2E\langle MX^{(0,v)}_T,  X^{(0,v)}_T\rangle .
\end{split}
\end{eqnarray}
Then it follows from Assumptions \ref{ass:1.1} and
the a priori estimate
\eqref{eq:f36} that
$$|\Delta J| \leq KE\int_0^T|v_t|^2dt=K||v ||^2_{\cal A}  $$

Consequently, we deduce that
$$\displaystyle\lim_ {|v |_{\cal A}
{\rightarrow 0}}\frac{|\Delta J|}{||v ||_{\cal A}}=0,$$
which implies that $J$ is Fr\'{e}chet differentiable and its
Fr\'{e}chet derivative $J'$ is given by  \eqref{eq:2.4}.
\end{proof}

\begin{thm}\label{them:b1}
Under Assumptions \ref{ass:1.1}-\ref{ass:1.2}, Problem\ref{pro:2.1}
has a unique optimal control  $u.$
\end{thm}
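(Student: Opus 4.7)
The plan is to exploit the variational setup already assembled: the admissible set $\mathcal{A}$ is a Hilbert space, and the cost $J$ has been shown to be strictly convex and coercive (Lemma \ref{lem:b4}) and Fr\'echet differentiable, hence continuous (Lemma \ref{lem:b5}). The conclusion will follow from the standard minimization theorem for a strictly convex, coercive, weakly lower semicontinuous functional on a reflexive Banach space.

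For existence, I would proceed as follows. Let $\alpha:=\inf_{u\in\mathcal{A}}J(u)$. By Lemma \ref{lem:b4}, $\alpha\geq 0$. Pick a minimizing sequence $\{u^n\}\subset\mathcal{A}$ with $J(u^n)\downarrow\alpha$. The coercivity $J(u)\to+\infty$ as $\|u\|_{\mathcal{A}}\to\infty$ from Lemma \ref{lem:b4} forces $\{u^n\}$ to be bounded in $\mathcal{A}$. Since $\mathcal{A}$ is a Hilbert space (hence reflexive), by the Banach-Alaoglu theorem there exists a subsequence, still denoted $\{u^n\}$, converging weakly to some $\bar u\in\mathcal{A}$. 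Because $J$ is convex (by Lemma \ref{lem:b4}) and strongly continuous (being Fr\'echet differentiable, by Lemma \ref{lem:b5}), it is weakly lower semicontinuous, so
\begin{equation*}
J(\bar u)\leq\liminf_{n\to\infty}J(u^n)=\alpha.
\end{equation*}
Combined with $J(\bar u)\geq\alpha$, this yields $J(\bar u)=\alpha$, so $\bar u$ is an optimal control.

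Uniqueness is immediate from the strict convexity established in Lemma \ref{lem:b4}: if $\bar u_1$ and $\bar u_2$ were two distinct minimizers, then $J\bigl(\tfrac12(\bar u_1+\bar u_2)\bigr)<\tfrac12 J(\bar u_1)+\tfrac12 J(\bar u_2)=\alpha$, contradicting minimality.

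No serious obstacle is anticipated here, since the heavy lifting (linearity of the state equation giving affine dependence of $X^{(x,u)}$ on $u$, nonnegativity of $Q$ and $M$, uniform positivity of $N$, and the a priori estimate \eqref{eq:f36}) has already been carried out in Lemmas \ref{lem:b4} and \ref{lem:b5}. The only point deserving a brief remark is the verification that a convex strongly continuous functional on a Hilbert space is weakly lower semicontinuous; this is standard and follows from Mazur's lemma, so no additional argument is required beyond citing it.
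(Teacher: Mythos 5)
Your proposal is correct and follows essentially the same route as the paper: the paper simply invokes Proposition 2.1.2 of Ekeland and T\'emam for a Fr\'echet differentiable, strictly convex, coercive functional on the Hilbert space $\mathcal{A}$, and your argument (minimizing sequence, boundedness from coercivity, weak compactness, weak lower semicontinuity of a convex continuous functional, plus the midpoint argument for uniqueness) is precisely the proof of that cited result. No gap; you have merely unpacked the reference.
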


\begin{proof}

In view of the fact that the cost functional $J$ is Fr\`{e}chet
differentiable,  strictly convex and $\displaystyle\lim_
{||u ||_{\cal A}{\rightarrow \infty}}J(u )=+\infty$, the
existence and uniqueness of the optimal control can be directly
obtained by Proposition  2.1.2  in \cite{EkTe76}.
\end{proof}

\begin{thm}\label{thm:2.5}
 Under Assumptions \ref{ass:1.1}-\ref{ass:1.2},  a necessary and sufficient conditions for an admissible control
 $u\in \cal A$
 to be an optimal control  of Problem \ref{pro:2.1} is
  for any admissible control $v \in {\cal A},$
  \begin{equation}\label{eq:b16}
    \langle  J'(u ), v -u \rangle = 0.
  \end{equation}
\end{thm}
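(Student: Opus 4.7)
The plan is to prove the two directions separately, exploiting the fact that $\mathcal{A}$ is a linear (Hilbert) space with no pointwise constraints, together with the convexity and Fréchet differentiability of $J$ established in Lemmas \ref{lem:b4}–\ref{lem:b5}.

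For necessity, I would fix an arbitrary $v\in\mathcal{A}$ and, since $\mathcal{A}$ is a linear space, consider the real-valued function $\varphi(\varepsilon):=J(u+\varepsilon(v-u))$ for $\varepsilon\in\mathbb{R}$. If $u$ is optimal then $\varphi$ attains its minimum at $\varepsilon=0$. Because $J$ is Fréchet differentiable, $\varphi$ is differentiable at $0$ with $\varphi'(0)=\langle J'(u),v-u\rangle$. Hence $\varphi'(0)=0$, which is precisely \eqref{eq:b16}. (Unlike the constrained convex case, I get equality rather than an inequality, because one can perturb in both directions $v-u$ and $u-v$ inside $\mathcal{A}$.)

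For sufficiency, I would invoke the standard convexity inequality: since $J$ is convex and Fréchet differentiable on the Hilbert space $\mathcal{A}$, one has
\begin{equation*}
J(v)\ \geq\ J(u)+\langle J'(u),v-u\rangle \qquad \forall\,v\in\mathcal{A}.
\end{equation*}
Assuming \eqref{eq:b16}, the right-hand side equals $J(u)$, so $J(v)\geq J(u)$ for every $v\in\mathcal{A}$, i.e.\ $u$ is optimal. The convexity inequality itself can be verified quickly by applying the definition of convexity to $\lambda v+(1-\lambda)u$, dividing by $\lambda>0$, and letting $\lambda\downarrow 0$ using the Fréchet differentiability established in Lemma \ref{lem:b5}.

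There is no real obstacle here; this is essentially an abstract convex-analysis fact and the whole argument is bookkeeping once Lemmas \ref{lem:b4}–\ref{lem:b5} are in hand. The only small point to handle carefully is that the statement uses $v-u$ rather than a generic direction $w$: since $v$ ranges over all of $\mathcal{A}$ and $\mathcal{A}$ is linear, $v-u$ also ranges over all of $\mathcal{A}$, so \eqref{eq:b16} is equivalent to $J'(u)=0$, which is the form in which the sufficiency argument is most transparent.
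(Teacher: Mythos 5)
Your proof is correct. It departs from the paper's route in a mild but genuine way: the paper does not argue from scratch, but instead cites Proposition 2.2.1 of Ekeland--T\'{e}mam to obtain the one-sided variational inequality $\langle J'(u),v-u\rangle\geq 0$ for all $v\in\mathcal{A}$ as the characterization of optimality over a convex set, and then exploits the linearity of $\mathcal{A}$ by substituting $2u-v$ for $v$ to get the reverse inequality, whence equality \eqref{eq:b16}. You instead prove both implications directly: necessity via the scalar function $\varphi(\varepsilon)=J(u+\varepsilon(v-u))$ minimized at $\varepsilon=0$ with two-sided perturbations, and sufficiency via the subgradient inequality $J(v)\geq J(u)+\langle J'(u),v-u\rangle$ derived from convexity and Fr\'{e}chet differentiability. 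The two arguments rest on the same underlying facts (convexity, differentiability, and the absence of constraints in $\mathcal{A}$); what your version buys is self-containedness, at the cost of re-deriving a standard convex-analysis characterization, while the paper's version is shorter but leans on the external reference and on the slightly slicker reflection trick. Your closing remark that \eqref{eq:b16} is equivalent to $J'(u)=0$ is exactly the content of the paper's Corollary \ref{cor:b1}, so you have in effect absorbed that corollary into the theorem's proof.
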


\begin{proof}
Since the cost functional $J$ is Fr\`{e}chet
differentiable and strictly convex, according to
Proposition 2.2.1 in \cite{EkTe76}, we conclude  that  a necessary and
sufficient conditions for an admissible control
 $u\in \cal A$
 to be an optimal control of Problem\ref{pro:2.1} is
  for any admissible control $v \in {\cal A},$
  \begin{equation}\label{eq:b17}
    \la J'(u ), v -u \ra\geq 0.
    \end{equation}
Since the above inequality is hold for any $v \in {\cal A}$,
we can replace $v $ in the above inequality by
$2u -v $ and get
\begin{equation}\label{eq:b18}
    \la J'(u ), v -u \ra\leq0.
    \end{equation}
    Thanks to \eqref{eq:b17} and \eqref{eq:b18}, we obtain \eqref{eq:b16}
\end{proof}

\begin{cor}\label{cor:b1}
 Under Assumptions \ref{ass:1.1}-\ref{ass:1.2}, a necessary and sufficient
conditions for an admissible control
 $u\in\cal A$
 to be an optimal control of Problem \ref{pro:2.1} is the $Fr\acute{e}chet$ derivative of
 $J$ at the admissible control $u\in\cal A$ given by

  \begin{equation}\label{eq:b19}
    J'(u )= 0.
  \end{equation}
\end{cor}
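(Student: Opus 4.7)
The plan is to derive this corollary as an immediate consequence of Theorem \ref{thm:2.5}, exploiting the linearity of the admissible control set. The only ingredient beyond Theorem \ref{thm:2.5} is the observation that $\mathcal{A}$ is a vector space (in fact a Hilbert space, as noted just after the definition of admissible controls), so translation by $-u$ is a bijection of $\mathcal{A}$ onto itself.

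For sufficiency, I would suppose $J'(u) = 0$ as an element of the dual of $\mathcal{A}$. Then $\langle J'(u), v - u\rangle = 0$ holds trivially for every $v \in \mathcal{A}$, so condition \eqref{eq:b16} of Theorem \ref{thm:2.5} is satisfied, and hence $u$ is optimal.

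For necessity, I would suppose that $u$ is optimal. Theorem \ref{thm:2.5} then gives $\langle J'(u), v - u\rangle = 0$ for every $v \in \mathcal{A}$. For an arbitrary $w \in \mathcal{A}$, set $v := u + w$; since $\mathcal{A}$ is a linear space, $v$ is again an admissible control, and therefore $\langle J'(u), w\rangle = \langle J'(u), v - u\rangle = 0$. As $w \in \mathcal{A}$ was arbitrary, this forces $J'(u) = 0$ in $\mathcal{A}^{*}$.

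There is no real obstacle here: the corollary is essentially a reformulation of Theorem \ref{thm:2.5} using the linear structure of $\mathcal{A}$, and the only subtlety to flag is the reliance on $\mathcal{A}$ being closed under translations (equivalently, a vector space), which was already established when $\mathcal{A}$ was introduced as a Hilbert space of predictable, square-integrable $\mathbb{R}^{m}$-valued processes.
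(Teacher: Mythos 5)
Your argument is correct and follows essentially the same route as the paper: the paper likewise establishes the equivalence of \eqref{eq:b16} and \eqref{eq:b19} by substituting $v+u$ for $v$ in \eqref{eq:b16}, and then invokes Theorem \ref{thm:2.5}. Your explicit remark that $\mathcal{A}$ is a vector space (so the translation is admissible) is a sound, if minor, clarification of the same idea.
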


\begin{proof}
 In the equality \eqref{eq:b16},  replacing  $v $  by $v +u $,
    we have $ \langle  J'(u ), v  \rangle = 0, \forall v\in \cal A,$
    i.e. $J'(u)=0.$ Thus the equality  \eqref{eq:b16} and
    the equality  \eqref{eq:b19} is equivalent.  So the proof can be completed directly by
    Theorem \ref{thm:2.5}.

\end{proof}

\section{ Stochastic Hamilton Systems}

This section will focus on establishing    the dual characterization of the optimal
control  by  stochastic  Hamilton system.

Let $(u , X )$ be an admissible pair,  then the  corresponding
adjoint BSDE of the stochastic
systems \eqref{eq:1.1} is defined by

\begin {equation}\label{eq:f322}
\left\{\begin{array}{lll}
dp_t&=&-\bigg[A_t^*p_t+\displaystyle\sum_{i=1}^dC_t^{i*}q_t^{i}+\displaystyle\int_ Z
E_t^*(\theta)r_t(\theta)\nu (d\theta)+2Q_tX_t\bigg]dt
\\&&+\displaystyle\sum_{i=1}^dq_t^idW_t^{i}+\displaystyle\int_ Z r_t(\theta)\tilde{\mu}(d\theta, dt),
 \\
p_T&=&2MX_T,
\end{array}
  \right.
  \end {equation}

Note that under Assumption \ref{ass:1.1}, from  Lemma \ref{lem:b2}, we  see that the  equation \eqref{eq:f322} admits a  unique solution $$
(p, q, r)\in S_{\cal F}^2(0, T;R^n)\times {\cal L}_{\cal F}^2(0,
T;R^n)\times {\cal L}_{\cal F}^{\nu, 2}([0,
T]\times Z;R^n).
$$

 We define  the
Hamiltonian function $H:[0, T]\times R^n\times R^m\times R^n\times
R^{n\times d}\times {\cal L}^{\nu,  2}( Z; R^n)\longrightarrow
R$ by
\begin{eqnarray}
\begin{split}
&H(t, x, u, p, q, r )\\
=&\big\langle p, A_tx+B_tu \big\rangle +\sum_{i=1}^d\big\langle q^{i},
C_t^{i}x+D_t^{i}u\big\rangle +\int_Z\big\langle r(\theta),
E_t(\theta)x+F_t(\theta)u\rangle  \nu (d\theta)\\
&+\langle Q_tx, x\rangle +\langle N_tu, u\rangle .
\end{split}
\end{eqnarray}
Then we can rewrite the adjoint equation \eqref{eq:f322} in Hamiltonian
system's form:
   \begin {equation}\label{eq:b21}
  \left\{\begin{array}{lll}
dp_t&=&-H_x(t, X_t, u_t, p_t, q_t, r_t )dt+\displaystyle\sum_{i=1}^dq_t^idW_t^i+\int_ Zr_t(\theta)\tilde{\mu}(d\theta, dt), \\
p_T&=&2MX_T.
  \end{array}
  \right.
  \end {equation}

  Now we give the  the dual characterization of the optimal
control.

\begin{thm}\label{thm:b2}Let Assumptions \ref{ass:1.1}-\ref{ass:1.2} be satisfied. Then,
 a necessary and
sufficient condition for an admissible pair $(u, X)$ to be an
optimal pair of Problem \ref{pro:2.1}  is
\begin{equation}\label{eq:24}
H_u(t, X_{t-}, u_t, p_{t-}, q_t, r_t )=0, ~~a.e. a.s.,
\end{equation}
i.e.,
\begin{equation}
\label{eq:b23}2 N_tu_t+B_t^*p_{t-}+\sum_{i=1}^d
D_t^{i*}q^{i}_t+\int_ Z F_t^*(\theta)r_t(\theta)\nu
(d\theta)=0,  ~~a.e. a.s..
\end{equation}
Here $(p,q, r)$  is the  solution
of the adjoint equation \eqref{eq:f322} corresponding to the
admissible pair $(u, X)$.
\end{thm}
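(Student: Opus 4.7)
The plan is to combine the optimality criterion $J'(u)=0$ (Corollary 3.7) with the explicit form of the Fréchet derivative from Lemma 3.6, and bridge the two expressions by means of the adjoint equation \eqref{eq:f322} through an It\^{o} duality identity.

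First I would fix an optimal candidate admissible pair $(u,X)$ and, for an arbitrary $v\in\mathcal{A}$, consider the ``perturbation process" $X^{(0,v)}$, i.e.\ the solution of the state SDE driven by $v$ starting from $0$. Let $(p,q,r)$ be the unique adapted triple solving the adjoint BSDE \eqref{eq:f322} associated to $(u,X)$; existence follows from Lemma \ref{lem:b2} under Assumption \ref{ass:1.1}. The central computation is to apply the It\^{o} product rule (for càdlàg semimartingales with Brownian and Poisson components) to $\langle p_t,X^{(0,v)}_t\rangle$ and then take expectation. The Brownian and Poisson stochastic integrals are true martingales (after localizing and using the $S^2$ and $\mathcal{L}^{\nu,2}$ estimates from Lemmas \ref{lem:b1}--\ref{lem:b2}), so they vanish in expectation.

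Bookkeeping the resulting drift terms, the contributions $\langle p_t,A_t X^{(0,v)}_t\rangle$, $\langle q^i_t,C^i_t X^{(0,v)}_t\rangle$ and $\int_Z \langle r_t(\theta),E_t(\theta)X^{(0,v)}_{t-}\rangle\nu(d\theta)$ from $\langle p_t,\,dX^{(0,v)}_t\rangle$ and the cross-variation (Brownian plus compensated-jump quadratic covariation) exactly cancel the corresponding terms arising from $\langle dp_t,X^{(0,v)}_t\rangle$. Using $p_T=2MX_T$ and $X^{(0,v)}_0=0$, the identity reduces to
\begin{equation*}
2E\langle MX_T,X^{(0,v)}_T\rangle+2E\int_0^T\langle Q_tX_t,X^{(0,v)}_t\rangle\,dt
=E\int_0^T\Big\langle B_t^*p_{t-}+\sum_{i=1}^d D_t^{i*}q_t^i+\int_Z F_t^*(\theta)r_t(\theta)\nu(d\theta),\,v_t\Big\rangle dt.
\end{equation*}
Substituting this into the formula for $\langle J'(u),v\rangle$ given in Lemma \ref{lem:b5} yields
\begin{equation*}
\langle J'(u),v\rangle = E\int_0^T\big\langle H_u(t,X_{t-},u_t,p_{t-},q_t,r_t),\,v_t\big\rangle dt,\qquad \forall v\in\mathcal{A}.
\end{equation*}

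Finally, by Corollary \ref{cor:b1}, optimality of $u$ is equivalent to $J'(u)=0$, i.e.\ to the above integral vanishing for every $v\in\mathcal{A}$. Since $H_u(t,X_{t-},u_t,p_{t-},q_t,r_t)$ is a predictable, square-integrable $\mathbb{R}^m$-valued process (Assumption \ref{ass:1.1} together with the integrability of $(p,q,r)$ and $u$), one may choose $v\equiv H_u$ as an admissible variation; the resulting $E\int_0^T|H_u|^2\,dt=0$ gives $H_u=0$ a.e.\ a.s., which is precisely \eqref{eq:b23}. The converse direction is immediate from the same identity. The main technical obstacle I anticipate is the careful accounting of the Poisson cross-variation term and the $t$ versus $t-$ distinction, but thanks to $\nu(Z)<\infty$ the left- and right-limit versions agree $dt\otimes dP$-a.e., so the cancellation goes through cleanly.
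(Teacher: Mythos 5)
Your proposal is correct and follows essentially the same route as the paper: reduce optimality to $J'(u)=0$ via Corollary \ref{cor:b1}, compute $\langle J'(u),v\rangle$ from Lemma \ref{lem:b5}, and use It\^{o}'s formula on $\langle p_t, X^{(0,v)}_t\rangle$ to rewrite it as $E\int_0^T\langle H_u, v_t\rangle dt$. Your explicit choice of $v\equiv H_u$ to conclude $H_u=0$ a.e.\ a.s.\ is a slightly more careful rendering of the paper's ``since $v$ is arbitrary'' step, but the argument is the same.
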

\begin{proof}
By  Corollary \ref{cor:b1}, in order to prove Theorem \ref{thm:b2}, we only need to show the equality
\eqref{eq:b19} and  the equality \eqref{eq:24} or \eqref{eq:b23} are equivalent.
Indeed, let $(u, X)$ is an admissible pair.
   From lemma \ref{lem:b5}, for any admissible
control $v\in {\cal A},$ we have
\begin{eqnarray}\label{eq:3.6}
\begin{split}
\langle  J'(u ),  v \rangle =2E\int_0^T\bigg[\langle N_tX_t,  X^{(0,v)}_t)
+\langle Q_tu_t, v_t\rangle \bigg]dt+2E\langle MX_T,  X^{(0,v)}_T\rangle.
\end{split}
\end{eqnarray}

On the other  hand, recalling the  adjoint equation \eqref{eq:f322}
and the state equation \eqref{eq:1.1}, applying It\^{o}'s formula to $\langle X^{(0,v)}_t,p_t\rangle$  and taking expectation, we have
\begin{eqnarray}\label{eq:f329}
\begin{split}
&2E\langle MX_T, X^{(0,v)}_T\rangle =E\langle p_T, X^{(0,v)}_T\rangle \\
=&E\int_0^T\langle p_s,  A_sX_{s}^{(0,v)}+B_sv_s\rangle ds+\displaystyle
\sum_{i=1}^dE\int_0^T\langle q_s^i,  C_s^iX^{(0,v)}_s+D_s^iv_s\rangle ds\\
&+E\int_0^T\langle X_{s}^{(0,v)},
-A_s^*p_s-\sum_{i=1}^dC_s^{i*}q_s^i-\int_ Z
E_s^*(\theta)r(\theta)\nu(d\theta)-2Q_sX_s\rangle ds\\
&+E\iint_{ Z\times(0, T]}\langle
r_s(\theta),  E_s(\theta)X^{(0,v)}_s+F_s(\theta)v_s\rangle \nu (d\theta)ds\\
=&E\int_0^T\langle B_s^*p_s+\sum_{i=1}^dD_s^{i*}q_s^i+\int_{ Z}
F_s^*(\theta)r_s(\theta)\nu(d\theta), v_s\rangle ds\\
&-\displaystyle 2E\int_0^T\langle Q_sX_s, X^{(0,v)}_s\rangle ds,
\end{split}
\end{eqnarray}

Hence

\begin{eqnarray}\label{eq:b26}
\begin{split}
&2E\langle MX_T, X_T^{(0,v)}\rangle +2\displaystyle E\int_0^T\langle Q_tX_s,
X_s^{(0,v)}\rangle ds+\displaystyle
2E\int_0^T\langle N_su_s, v_s\rangle ds\\
=&\displaystyle
E\int_0^T\langle B_s^*p_s+\sum_{i=1}^dD_s^{i*}q_s^i+\int_ Z
F_s^*(\theta)r_s(\theta)\nu(d\theta)+2N_su_s, v_s\rangle ds.
\end{split}
\end{eqnarray}

Combining \eqref{eq:3.6} and\eqref{eq:b26}, we get
\begin{eqnarray}\label{eq:29}
\begin{split}
&\langle J'(u),  v \rangle \\
=&E\int_0^T\langle
B_s^*p_s+\sum_{i=1}^dD_s^{i*}q_s^i+\int_ Z
F_s^*\langle \theta)r_s(\theta)\nu(d\theta)+2N_su_s, v_s\rangle ds\\
=&E\displaystyle\int_0^T\langle H_u(s, X_{s-}, u_s, p_{s-}, q_s, r_s),
v_s\rangle ds, ~~~\forall v\in {\cal A}.
\end{split}
\end{eqnarray}

Since the $v\in {\cal A} $ in \eqref{eq:29} is arbitrary, we deduce that the equality
\eqref{eq:b19} and  the equality \eqref{eq:b23} or \eqref{eq:24} are equivalent. Then
the desired result then follows.
\end{proof}

\begin{cor}\label{cor:22}
 Let assumptions \ref{ass:1.1}-\ref{ass:1.2} be satisfied. Then,  Problem\ref{pro:2.1} has a unique optimal control pair $(u,X),$
  where the optimal control $u$ have
  the dual representation
  \begin{equation}\label{eq:b311}
u_t=-\frac{1}{2}N_t^{-1}\bigg[B_t^*p_{t-}+\sum_{i=1}^dD_t^{i*}q^i_t+\displaystyle\int_ Z
F_t^*(\theta)r_t(\theta)\nu(d\theta)\bigg],
0\leq t\leq T.
 \end{equation}
 Here $(p,q, r)$  is the  unique   solution
of the adjoint equation \eqref{eq:f322} corresponding to the optimal
control  pair $(u, X).$
 \end{cor}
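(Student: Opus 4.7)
The plan is to assemble this corollary from three pieces already established in the paper. First, Theorem \ref{them:b1} gives existence and uniqueness of an optimal control $u\in\mathcal{A}$ for Problem \ref{pro:2.1}; letting $X=X^{(x,u)}$ denote the corresponding state process (well defined and unique by Lemma \ref{lem:b1} under Assumption \ref{ass:1.1}), we obtain the unique optimal pair $(u,X)$. Second, Lemma \ref{lem:b2}, again under Assumption \ref{ass:1.1}, guarantees that the adjoint BSDE \eqref{eq:f322} associated with $(u,X)$ admits a unique solution
\[
(p,q,r)\in S_{\cal F}^2(0,T;R^n)\times {\cal L}_{\cal F}^2(0,T;R^{n\times d})\times {\cal L}_{\cal F}^{\nu,2}([0,T]\times Z;R^n).
\]

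Next I would apply Theorem \ref{thm:b2}: since $(u,X)$ is optimal, the necessary and sufficient first-order condition \eqref{eq:b23} must hold a.e., a.s., namely
\[
2N_tu_t+B_t^*p_{t-}+\sum_{i=1}^d D_t^{i*}q^i_t+\int_Z F_t^*(\theta)r_t(\theta)\nu(d\theta)=0.
\]
Finally I would invoke the uniform positivity from Assumption \ref{ass:1.2}: $N_t\geq\delta I$ a.e., a.s. implies that $N_t$ is a.e., a.s. invertible with $\lvert N_t^{-1}\rvert\leq\delta^{-1}$. Multiplying the displayed equation by $\tfrac{1}{2}N_t^{-1}$ and solving for $u_t$ yields exactly the feedback formula \eqref{eq:b311}.

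There is really no serious obstacle: every ingredient is in hand. The only small check I would include is that the right-hand side of \eqref{eq:b311}, constructed from $(p_{t-},q_t,r_t(\cdot))$ and the bounded predictable coefficients $N^{-1},B,D^i,F$ of Assumption \ref{ass:1.1}, is automatically a predictable $R^m$-valued process lying in $\mathcal{A}$; this follows from the regularity of $(p,q,r)$ together with the bound on $N^{-1}$, so that the formula is genuinely well posed as an admissible feedback representation of the optimal control.
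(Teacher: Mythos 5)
Your proposal is correct and follows essentially the same route as the paper: existence and uniqueness of the optimal pair from Theorem \ref{them:b1}, then the first-order condition \eqref{eq:b23} from Theorem \ref{thm:b2}, solved for $u_t$ using the uniform positivity of $N$ from Assumption \ref{ass:1.2}. The paper's own proof is just a terser version of this, so the extra details you supply (well-posedness of the adjoint equation via Lemma \ref{lem:b2} and admissibility of the resulting feedback expression) are welcome but not a departure in method.
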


 \begin{proof}
   From Theorem \ref{them:b1}, we know that Problem \ref{pro:2.1} have an
 unique optimal control pair$(u,X).$ Furthermore,  by Theorem \ref{thm:b2}
 and the equality \eqref{eq:b23},  the optimal control is given by
 \eqref{eq:b311}.
 \end{proof}

Now we can introduce the following  so-called stochastic Hamilton system which consists of
the state equation \eqref{eq:1.1},  the dual equation \eqref{eq:f322} and the dual representation
\eqref{eq:b311} by

\begin {equation}\label{eq:f333}
\left\{\begin{array}{lll}
dX_t&=&(A_tX_t+B_tu_t)dt+\displaystyle\sum_{i=1}^d(C_t^{i}X_t+D_t^{i}u_t)dW^{i}_t
  \\&&+\displaystyle
  \int_{ Z}(E_t(\theta)X_{t-}+F_t(\theta)u_t)\tilde{\mu}(d\theta,
  dt), \\
u_t&=&-\displaystyle\frac{1}{2}N_t^{-1}\bigg[B_t^*p_{t-}+\displaystyle\sum_{i=1}^dD_t^{i*}q^i_t+\displaystyle\int_ Z
F_t^*(\theta)r_t(\theta)\nu(d\theta)\bigg], \\
dp_t&=&-\bigg[A_t^*p_t+\displaystyle\sum_{i=1}^dC_t^{i*}q_t^{i}+\displaystyle\int_Z
E_t^*(\theta)r_t(\theta)\nu (d\theta)+2Q_tX_t\bigg]dt,
\\&&+\displaystyle\sum_{i=1}^dq_t^idW_t^{i}+\displaystyle
\int_Z r_t(\theta)\tilde{\mu}(d\theta, dt), \\
X_0&=&x,  ~~p_T=2MX_T.
\end{array}
  \right.
  \end {equation}

Clearly it is a  fully coupled  forward-backward stochastic differential equations
 (FBSDEs in short form) driven by Brownian motion $W$ and Poisson random martingale measure $\tilde{\mu}(d\theta, dt)$. The solutions  consist of the stochastic
 process quaternary  $(X, p, q, r).$

\begin{thm}
  Let assumptions \ref{ass:1.1}-\ref{ass:1.2}
   be satisfied. Then the stochastic Hamilton
  system \eqref{eq:f333} has a unique solution $(X, p, q, r)\in S_{\cal F}^2(0, T;R^n)\times S_{\cal F}^2(0, T;R^n)\times {\cal L}_{\cal F}^2(0, T;R^{n\times d})
 \times {\cal L}_{\cal F}^{\nu , 2}([0, T]\times Z; R^n).$  And $u$ in \eqref{eq:f333} is the
    optimal control of the stochastic LQ Problem \ref{pro:2.1}, the stochastic process
    $X$ is the corresponding optimal state. Moreover,  the following a priori
  estimate holds

\begin{eqnarray}
\label{eq:f334}
  \begin{split}
&\displaystyle E\sup_{0\leqslant t\leqslant T}|X_t|^2+E\sup_{0\leqslant t\leqslant
T}|p_t|^2 +E\int_0^T|q_t|^2dt+E\iint_{ Z\times (0,
T]}|r_t(\theta)|^2\nu(d\theta)dt\leqslant K|x|^2,
  \end{split}
\end{eqnarray}
where $K$ is some deterministic positive constant.
\end{thm}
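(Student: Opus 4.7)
The proof will essentially assemble the pieces already established in Sections 3 and 4 rather than develop new machinery, so the plan is more bookkeeping than deep argument. For existence, I will take $u$ to be the unique optimal control guaranteed by Theorem \ref{them:b1}, let $X=X^{(x,u)}$ be the corresponding state process obtained from Lemma \ref{lem:b1} applied to \eqref{eq:1.1}, and then let $(p,q,r)$ be the unique solution in $S_{\cal F}^2(0,T;R^n)\times {\cal L}_{\cal F}^2(0,T;R^{n\times d})\times {\cal L}_{\cal F}^{\nu,2}([0,T]\times Z;R^n)$ of the adjoint BSDE \eqref{eq:f322} provided by Lemma \ref{lem:b2}. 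Corollary \ref{cor:22} then guarantees that the feedback relation in the second line of \eqref{eq:f333} holds, so $(X,p,q,r)$ solves the Hamilton system.

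For uniqueness, I will go in the reverse direction. Suppose $(\tilde X,\tilde p,\tilde q,\tilde r)$ is any solution of \eqref{eq:f333} in the stated space. Define $\tilde u$ by the feedback formula appearing in \eqref{eq:f333}; by construction the dual relation \eqref{eq:b23} holds, and $(\tilde p,\tilde q,\tilde r)$ is the adjoint BSDE solution associated with the admissible pair $(\tilde u,\tilde X)$. Theorem \ref{thm:b2}, whose condition is both necessary \emph{and} sufficient, then forces $(\tilde u,\tilde X)$ to be an optimal pair of Problem \ref{pro:2.1}. Uniqueness of the optimal control (Theorem \ref{them:b1}) gives $\tilde u=u$, uniqueness for the forward SDE (Lemma \ref{lem:b1}) gives $\tilde X=X$, and uniqueness for the adjoint BSDE (Lemma \ref{lem:b2}) gives $(\tilde p,\tilde q,\tilde r)=(p,q,r)$.

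The a priori estimate \eqref{eq:f334} will be proved by a cascade of three applications of the standard estimates. First, applying Lemma \ref{lem:b1} to \eqref{eq:1.1} with the trivial control $v\equiv 0$ and using the boundedness of $Q,M$ from Assumption \ref{ass:1.1}, one obtains $J(0)\leqslant K|x|^2$. Since $u$ is optimal and $N_t\geqslant \delta I$ by Assumption \ref{ass:1.2}, the bound $\delta\|u\|_{\cal A}^2\leqslant J(u)\leqslant J(0)\leqslant K|x|^2$ from the proof of Lemma \ref{lem:b4} yields $\|u\|_{\cal A}^2\leqslant K|x|^2$. A second application of Lemma \ref{lem:b1}, now with control $u$ and using the boundedness of $B,D^i,F$, gives $E\sup_{t}|X_t|^2\leqslant K|x|^2$. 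Finally, applying Lemma \ref{lem:b2} to \eqref{eq:f322} with driver $2Q_tX_t$ and terminal value $2MX_T$, both controlled by the previous step via the boundedness of $Q,M$, produces the remaining bound on $(p,q,r)$; summing the three gives \eqref{eq:f334}.

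There is no genuine analytic obstacle, since all the hard work has been packaged into the earlier existence/uniqueness results for the LQ problem, the SDE, and the BSDE. The only point that requires some care is the logical direction in the uniqueness argument: one must read Theorem \ref{thm:b2} as a sufficient condition (starting from a Hamilton-system solution and deducing optimality), rather than as the necessary condition in which it is typically invoked. Once this is observed, the rest of the proof is a clean cascade of the prior lemmas and the constant $K$ in \eqref{eq:f334} is seen to depend only on $T$, the Lipschitz/boundedness constants from Assumption \ref{ass:1.1}, and the ellipticity constant $\delta$ from Assumption \ref{ass:1.2}.
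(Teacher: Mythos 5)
Your proof is correct, but it diverges from the paper's argument in two places, and the comparison is worth recording. For the a priori estimate \eqref{eq:f334} the paper first applies It\^{o}'s formula to $\langle p_t,X_t\rangle$ to obtain the duality identity \eqref{eq:b29}, namely $2J(u)=E\langle p_0,x\rangle$, and then runs a self-absorption argument: the BSDE estimate of Lemma \ref{lem:b2} bounds $E\sup_t|p_t|^2$ by $KE\langle p_0,x\rangle\leq \frac12 E\sup_t|p_t|^2+K|x|^2$, after which the bound on $X$ follows. You instead bound $\|u\|^2_{\cal A}\leq \delta^{-1}J(u)\leq \delta^{-1}J(0)\leq K|x|^2$ using optimality and the coercivity from Assumption \ref{ass:1.2}, and then cascade Lemma \ref{lem:b1} and Lemma \ref{lem:b2}; this avoids the It\^{o} computation and the absorption step entirely, at the cost of using the optimality of $u$ rather than only the structure of the system \eqref{eq:f333}. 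For uniqueness the paper relies on linearity: the difference of two solutions solves the homogeneous Hamilton system with $x=0$, so the a priori estimate (which the paper establishes for an arbitrary solution) forces it to vanish. You instead map an arbitrary solution $(\tilde X,\tilde p,\tilde q,\tilde r)$ back to an admissible pair satisfying \eqref{eq:b23}, invoke the sufficiency half of Theorem \ref{thm:b2} to conclude optimality, and then use uniqueness of the optimal control (Theorem \ref{them:b1}) together with the uniqueness statements of Lemmas \ref{lem:b1} and \ref{lem:b2}. Both routes are sound; you correctly flag that your version of Theorem \ref{thm:b2} must be read in the sufficient direction, and you should also note explicitly that the feedback-defined $\tilde u$ is admissible (predictable and square integrable), which follows from $(\tilde p,\tilde q,\tilde r)$ lying in the stated spaces, the boundedness of the coefficients, and $\nu(Z)<\infty$. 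The paper's route has the advantage that the estimate \eqref{eq:f334} is proved for every solution of \eqref{eq:f333} without first identifying it with the optimal one, which is what makes the linear-difference uniqueness argument one line; your route is more elementary but must establish uniqueness before the estimate can be asserted for an arbitrary solution.
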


\begin{proof}
 The existence result can be directly obtained by
Corollary \eqref{cor:22}.
 The uniqueness result is obvious once the a priori
  estimate \eqref{eq:f334} holds. Therefore, it remains to prove that
 the a priori
  estimate  \eqref{eq:f334} hold.

  Let $(X, p, q, r)$ is a solution of the
  stochastic Hamilton systems \eqref{eq:f333}. Using  It\^{o}$^,$s formula
  $\langle p_t, X_t\rangle ,$  we get
   \begin{equation}
  \label{eq:b29}
2E\langle MX_T, X_T\rangle +2E\int_0^T\langle N_tu_t, u_t\rangle +2E\int_0^T\langle Q_tX_t,
X_t\rangle dt=E\langle p_0, x\rangle.
  \end{equation}

  In the following, $K$ will denote a generic positive constant
  and might change from line to line.

 For the backward part of the
  stochastic Hamilton systems \eqref{eq:f333},  using the a priori estimate \eqref{eq:b11} for BSDEs, we have
\begin{eqnarray}\label{eq:b30}
\begin{split}
 E\sup&_{0\leqslant t\leqslant T}|p_t|^2+E\int_0^T|q_t|^2dt
+E\iint_{ Z\times (0, T]}|r_t(\theta)|^2\nu(d\theta)dt\\
&\leqslant K\bigg[E\int_0^T|Q_tX_t|^2dt+E|MX_T|^2\bigg]\\
&\leqslant K\bigg[E\int_0^T\langle Q_tX_t, X_t\rangle dt+E\langle MX_T, X_T\rangle \bigg]
\\&\leqslant K E\langle p_0, x\rangle
\\&\leqslant K E | p_0|| x|
\\
&\leqslant \frac{1}{2} E|p_0|^2+K|x|^2
\\& \leqslant\frac{1}{2} E\sup_{0\leqslant t\leqslant T}|p_t|^2+K|x|^2,
\end{split}
\end{eqnarray}
where we have used  the nonnegative property of $Q$ and $M$, the equality \eqref{eq:b29} and the elementary
inequality
$$2ab\leqslant \varepsilon a^2+\frac{1}\varepsilon b^2,  ~~~\forall \varepsilon >0,  a>0,  b>0.$$

 Hence we get
\begin{eqnarray}\label{eq:b31}
\begin{split}
E\sup_{0\leqslant t\leqslant T}|p_t|^2+E\int_0^T|q_t|^2dt+E\iint_{(0,
T]\times Z}|r_t(\theta)|^2vd(\theta)dt \leqslant K|x|^2.
\end{split}
\end{eqnarray}

On the other hand, for the forward part of the
  stochastic Hamilton systems \eqref{eq:f333},  using the a priori estimate \eqref{eq:f36} for SDEs, we have
\begin{eqnarray}
\begin{split}\label{eq:b32}
  \displaystyle E\sup_{0\leqslant t\leqslant T}|x_t|^2&\leqslant
K\bigg[E\int_0^T|u_t|^2dt+|x|^2\bigg]\\
&\leqslant K\bigg[E\int_0^T\langle N_tu_t,  u_t\rangle dt+|x|^2\bigg]
\\&\leqslant K\bigg[E\langle p_0, x\rangle +|x|^2\bigg]
\\&\leqslant K\bigg[E |p_0| |x| +|x|^2\bigg]
\\
 &\leqslant K\bigg[E|p_0|^2+|x|^2\bigg]
 \\&\leqslant K\bigg[E\sup_{0\leqslant t\leqslant T}|p_t|^2+|x|^2\bigg]
\\&\leqslant K|x|^2,
\end{split}
\end{eqnarray}
 where   we have used  the  nonnegative property of $N$, the  equality \eqref{eq:b29}, the elementary
inequality
$2ab\leqslant \varepsilon a^2+\frac{1}\varepsilon b^2,  ~~~\forall \varepsilon >0,  a>0,  b>0$, and the inequality \eqref{eq:b31}.

Combining the inequality \eqref{eq:b31} and  the inequality \eqref{eq:b32},
the inequality \eqref{eq:f334} is directly obtained.
The proof is complete.
\end{proof}

In summary, the stochastic Hamilton system \eqref{eq:f333}
completely characterizes the optimal control of LQ problem. Therefore, solving LQ problem is equivalent to solving the
stochastic Hamilton system, moreover, the unique optimal control  can be given explicitly by
\eqref{eq:b311}.

\section{ Backward Stochastic  Riccati equation with jumps }

Although  the  stochastic Hamilton system \eqref{eq:f333} is  a complete characterization of the stochastic LQ problem,
 it is a fully coupled forward-backward stochastic differential equation.
The solution to \eqref{eq:f333} would be hard to be solved
 so that this characterization is also not satisfactory.
As the stochastic LQ theory in Brownian motion framework (see \cite{Tang2003}), it is natural to connect
the stochastic LQ problem with stochastic Riccati equation.
In this section, we will introduce stochastic Riccati equation with jumps and establish the link
with the stochastic Hamilton system \eqref{eq:f333}, then show the optimal control of the stochastic LQ problem has
 state feedback representation. In the end, we will focus on discussing
 the existence and uniqueness of the solution to the
 stochastic Riccati equation with jumps.

\subsection{Derivation of stochastic  Riccati equation with jumps}

In the following, by
dynamic programming principle, we will derive  the general form of the stochastic Riccati equation with jumps.

  Now consider the following parameterized stochastic LQ problem  on
  the  initial time $t$ and  the initial state $x$:

   The state equation
\begin{equation}
\left\{\begin {array}{ll}

  dX_s=&(A_sX_s+B_su_s)ds+\displaystyle\sum_{i=1}^d(C_s^iX_s+D_s^iu_s)dW^i_s
  \\&+\displaystyle\int_{E}(E_s(\theta)X_{s-}+F_s(\theta)u_s)\tilde{\mu}(d\theta, ds),
     \\X_t~=&x, ~~~~~0\leq s\leq T.

\end {array}
\right. \end{equation}
 The cost functional
\begin{equation}
J(t, x;u):=\displaystyle E^{{\cal F}_t}\int_t^T\bigg[\langle Q_sX_s,
X_s\rangle +\langle N_su_s, u_s\rangle \bigg]ds +E^{{\cal F}_t}\langle MX_T,  X_T\rangle .
\end{equation}
Define the  value function by

\begin{equation}
\Phi_t(x):=\displaystyle\inf_{u\in {\cal A}}J(t, x;
u).
\end{equation}

Then  the value function $\{\Phi_t(x),  t\in [0,  T],  x\in R^n\}$ is a family
of $\{{\cal F}_t,  0\leq t\leq T\}$-adapted processes  with values in $R.$   In general,
for any $x\in R^n$, $\Phi_t(x)$ is not a bounded variation function
with respect to $t.$  So  we can
only expect that $\{\Phi_t(x), t\in [0, T], x\in R^n \}$ is a family
of semimartingales with the decomposition
\begin{eqnarray}\label{eq:f342}
\Phi_t(x)=\langle Mx,
x\rangle +\displaystyle\int_t^T\Gamma_s(x)ds-\sum_{i=1}^d\int_t^T\Lambda_s^i(x)dW_s^i
-\iint_{(t,T]\times Z}\Psi_s(\theta, x)\tilde{\mu}(d\theta, ds).
\end{eqnarray}
Furthermore, suppose
\begin{eqnarray}
\begin{split}\label{eq:Z1}
 \Phi_t(x)&=\langle K_tx, x\rangle ; \\\Lambda_t^i(x)&=\langle L_t^ix, x\rangle , i=1, 2,
\cdots, d; \\\Psi_s(\theta, x)&=\langle H_t(\theta)x, x\rangle , ~~~~t\in[0, T],
x\in R^n, \theta\in  Z,
\end{split}
\end{eqnarray}  where  $K $ is  a  symmetric matrix-valued $\{{\cal F}_t, 0\leq t\leq T\}$-adapted process,  $L^i
(i=1, 2, \cdots, d) $ and $H$ are symmetric matrix-valued $\{{\cal F}_t, 0\leq t\leq T\}$ -predictable processes.
Firstly, using the dynamic programming principle (see\cite{MeTa10}) 
and It\^{o}-Ventzell  formulation with jumps (see\cite{ChTa10}), we deduce that
$\Gamma_t(x)$ in the semimartingale  decomposition \eqref{eq:f342} have the following
expression
\begin{eqnarray}
\begin{split}\label{eq:f343}
  \displaystyle \Gamma_t(x)&=\inf_{u\in
  {R^m}}\bigg\{\langle D\Phi_t(x), A_tx+B_tu\rangle +\frac{1}{2}\sum_{i=1}^d\langle D^2\Phi_t(x), (C_t^ix+D_t^iu)(C_t^ix+D_t^iu)^*\rangle\\
  &~~~+\sum_{i=1}^d\langle D\Lambda_t^i(x), C_t^ix+D_t^iu\rangle +\langle Q_tx, x\rangle +\langle N_tu, u\rangle \\
&~~~+\int_{ Z}[\Phi_t(x+E_t(\theta)x+F_t(\theta)u)-\Phi_t(x)-\langle D\Phi_t(x),
E_t(\theta)x
+F_t(\theta)u\rangle ]\nu(d\theta)\\
  &~~~+\int_{ Z}[\Psi_t(\theta,  x+E_t(\theta)x+F_t(\theta)u)-\Psi_t(\theta,
  x)]\nu(d\theta)\bigg\},
\end{split}
\end{eqnarray}
where $D\Phi_t(x)$ and $D\Lambda_t(x)$ is the gradient of $\Phi_t(x)$ and $\Lambda_t(x)$ with respect to $x$
respectively,
$D^2\Phi_t(x)$ is the Hessian of $\Phi_t(x)$ with respect to $x$.
 Now substituting  the relationship
\eqref{eq:Z1} into \eqref{eq:f343}, we get
\begin{eqnarray}
\begin{split}\label{eq:f344}
  \displaystyle \Gamma_t(x)=&\inf_{u\in
  {R^m}}\bigg\{\bigg\langle x,  \bigg[K_{t}A_t+A^*_tK_{t}+\sum_{i=1}^dL^{i}_tC^{i}_t+
\sum_{i=1}^dC^{i*}_tL^{i}_t
+\sum_{i=1}^dC^{i*}_tK_{t}C^{i}_t\\
&+\int_{ Z}H_t(\theta)E_t(\theta)\nu(d\theta)
+\int_{ Z}
E^{*}_t(\theta)H_t(\theta)\nu(d\theta)\\
&+\int_{ Z} E^{*}_t(\theta)K_{t}E_t(\theta)\nu (d\theta)
+\int_{ Z}E^{*}_t(\theta)H_t(\theta)E_t(\theta)\nu(d\theta)
+Q_t\bigg]x\bigg\rangle\\
&+2\bigg\langle u,   \bigg[B^*_tK_{t}+\sum_{i=1}^dD^{i*}_tL^{i}_t+\sum_{i=1}^dD^{i*}_tK_{t}C^{i}_t\\
&+\int_ Z
F^{*}_t(\theta)H_t(\theta)\nu(d\theta)
+\int_ ZF^{*}_t(\theta)K_{t}E_t(\theta)\nu(d\theta)\\
&+\int_ ZF^{*}_t(\theta)H_t(\theta)E_t(\theta)\nu
(d\theta)\bigg]x\bigg\rangle
+\bigg\langle u,  \bigg[N_t+\sum_{i=1}^dD^{i*}_tK_{t}D^{i}_t\\
&+\int_ Z
F^{*}_t(\theta)K_{t}F_t(\theta)\nu(d\theta)+\int_ Z
F^{*}_t(\theta)H_t(\theta)F_t(\theta)\nu (d\theta)\bigg]u\bigg\rangle \bigg\}.
\end{split}
\end{eqnarray}
Ii is obvious that  for $\forall (t, x,\omega)\in [0, T]\times R^n\times \Omega$,
$\Gamma_t(x)$ is  the Quadratic functional extreme  with respect to $u\in R^m$.

  Furthermore, if
$N_t+\displaystyle\sum_{i=1}^dD^{i*}_tK_tD^i_t+\int_{ Z}
F^*_t(\theta)K_tF_t(\theta)\nu(d\theta)+\int_{ Z}
F^*_t(\theta)H_t(\theta)F_t(\theta)\nu(d\theta)$ is  strictly positive definite,
then it follows that the infimum   in
\eqref{eq:f344} is obtained at
\begin{eqnarray}
\begin{split}\label{eq:f345}
  \displaystyle u&=-\bigg[N_t+\displaystyle\sum_{i=1}^dD_t^{i*}K_tD_t^i+\int_{ Z}
  F_t^*(\theta)K_tF_t(\theta)\nu(d\theta)\\
  &~~~+\int_{ Z} F_t^*(\theta)H_t(\theta)F_t(\theta)\nu (d\theta)\bigg]^{-1}\bigg(B_t^*K_t+
 \displaystyle\sum_{i=1}^d D_t^{i*}K_tC_t^i+\displaystyle\sum_{i=1}^dD_t^{i*}L_t^i\\
  &~~~+\int_{ Z} F_t^*(\theta)K_t(\theta)E_t(\theta)\nu (d\theta)
  +\int_{ Z}F_t^*(\theta)H_t(\theta)\nu (d\theta)
  \\&~~~+\int_{ Z}F_t^*(\theta)H_t(\theta)E_t(\theta))\tilde \nu(d\theta)\bigg)x
\end{split}
\end{eqnarray}

Combining \eqref{eq:f342}, \eqref{eq:Z1},
\eqref{eq:f344} and \eqref{eq:f345}, we deduce that the matrix-valued  processes $(K, L,
H)$ satisfy the following  Riccati equation
\begin{equation}\label{eq:4.9}
\left\{\begin {array}{ll}

 & dK_t=-G_t-Q_t+\hat{B}_t\hat{N}_t^{-1}\hat{B}_t^*dt+\displaystyle\sum_{i=1}^dL_t^idW_t^i+\int_{ Z}
H_t(\theta)\mu(d\theta, dt), \\
&K_T=M,

\end {array}
\right. \end{equation}
where
\begin{eqnarray}
  \begin{split}\label{eq:f347}
G_t:=&\displaystyle
K_tA_t+A^*_tK_t+\sum_{i=1}^dL^i_tC^i_t+\sum_{i=1}^dC^{i*}_tL^{i}_t
+\sum_{i=1}^dC^{i*}_tK_tC^{i}_t\\
&+\int_{ Z}H_t(\theta)E_t(\theta)\nu(d\theta)
+\int_{ Z}
E^*_t(\theta)H_t(\theta)\nu(d\theta)\\
&+\int_{ Z} E^{*}_t(\theta)K_tE_t(\theta)\nu(d\theta)
+\int_{ Z}
E^{*}_t(\theta)H_t(\theta)E_t(\theta)\nu(d\theta),
\end{split}
\end{eqnarray}
\begin{eqnarray}
  \begin{split}
\hat{B}_t=&K_tB_t+\sum_{i=1}^dL^i_tD^i_t+\sum_{i=1}^dC^{i*}_tK_tD^i_t
\\&+\int_{ Z}H_t(\theta)F_t(\theta)\nu(d\theta)
+\int_{ Z}
E^{*}_t(\theta)K_tF_t(\theta)\nu(d\theta)\\
&+\int_{ Z}E^{*}_t(\theta)H_t(\theta)F_t(\theta)\nu(d\theta),
\end{split}
\end{eqnarray}
\begin{eqnarray}\label{eq:4.12}
  \begin{split}
\hat{N}_t=&N_t+\sum_{i=1}^dD^{i*}_tK_tD^{i*}_t
+\int_ Z F^{*}_t(\theta)K_tF_t(\theta)\nu(d\theta)\\
&+\int_ Z
F^{*}_t(\theta)H_t(\theta)F_t(\theta)\nu(d\theta).
  \end{split}
\end{eqnarray}
It is a high order nonlinear backward stochastic differential equations with the generator $-G_t-Q_t+\hat{B}_t\hat{N}_t^{-1}\hat{B}_t^*$, the unknown elements are the triple matrix process
$ (K, L, H).$  The above backward stochastic Riccati  differential equation with jumps
will be hereafter abbreviated as BSRDEJ.

Now we give the rigorous connection of BSRDEJ \eqref{eq:4.9} to the
stochastic Hamilton system \eqref{eq:f333} and to the stochastic LQ Problem \ref{pro:2.1}.

\begin{thm}\label{thm:f341}
Let Assumptions  \ref{ass:1.1}--\ref{ass:1.2} be satisfied. Let  $(X, p, q, r)$ be the
solution of  the
stochastic
Hamilton system \eqref{eq:f333}
with   $u$  being  the optimal control.
Assume that  $(K, L, H)\in {\cal S}_{\cal F}^2(0, T; {\cal S}^n)\times
{\cal L}_{\cal F}^2(0, T; ({\cal S}^n)^d)
 \times {\cal L}_{\cal F}^{\nu, 2}([0, T]\times Z; {\cal S}^n)$
is  the solution to BSRDEJ \eqref{eq:4.9} and the  matrix-valued
process
$\hat{N}$(noting \eqref{eq:4.12}) is  a.e.a.s.  positive definite. Then,
we have, for  $t\in [0, T]$ and $\theta \in Z$
     \begin{eqnarray}\label{eq:4.13}
     \begin{split}
      p_t&=K_tX_t;  \\
    q^i_t&=(L_t^i+K_{t-}C^i_t)X_{t-}+K_{t-}D^i_tu_t,  ~~~~i=1, 2, \cdots, d;\\
     r_t(\theta)&=\bigg(H_t(\theta)+K_{t-}E_t(\theta)+H_t(\theta)E_t(\theta)\bigg)X_{t-}\\
     &~~~~
    +\bigg(K_{t-}F_t(\theta)
    +H_t(\theta)F_t(\theta)\bigg)u_t.
     \end{split}
   \end{eqnarray}
   \end{thm}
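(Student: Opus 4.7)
The plan is a verification-and-uniqueness argument. Define the candidate triple
\begin{eqnarray*}
\tilde p_t &:=& K_t X_t, \\
\tilde q^i_t &:=& (L^i_t + K_{t-} C^i_t) X_{t-} + K_{t-} D^i_t u_t, \qquad i=1,\ldots,d,\\
\tilde r_t(\theta) &:=& \bigl(H_t(\theta) + K_{t-} E_t(\theta) + H_t(\theta) E_t(\theta)\bigr) X_{t-} + \bigl(K_{t-} F_t(\theta) + H_t(\theta) F_t(\theta)\bigr) u_t.
\end{eqnarray*}
The goal is to show that $(\tilde p,\tilde q,\tilde r)$ itself solves the adjoint BSDE \eqref{eq:f322} attached to the admissible pair $(u,X)$ from the Hamilton system \eqref{eq:f333}. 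Since that BSDE has a unique solution by Lemma \ref{lem:b2}, the identifications \eqref{eq:4.13} follow immediately from $(p,q,r)=(\tilde p,\tilde q,\tilde r)$.

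The central computation applies the It\^o product formula for c\`adl\`ag semimartingales to $K_t X_t$:
$$d(K_t X_t)=(dK_t)X_{t-}+K_{t-}\,dX_t+d[K,X]_t.$$
The Brownian piece of the covariation contributes $\sum_i L^i_t(C^i_t X_{t-}+D^i_t u_t)\,dt$, while the pure-jump piece contributes $\int_Z H_t(\theta)(E_t(\theta)X_{t-}+F_t(\theta)u_t)\,\mu(d\theta,dt)$. Rewriting $\mu=\tilde\mu+\nu(d\theta)dt$ in the latter splits it into a compensated $\tilde\mu$-martingale and an absolutely continuous drift. Collecting the $dW^i_t$ coefficients of $(dK_t)X_{t-}$ and $K_{t-}\,dX_t$ reproduces exactly $\tilde q^i_t$, and collecting the $\tilde\mu(d\theta,dt)$ coefficients reproduces exactly $\tilde r_t(\theta)$; so the martingale structure of $(\tilde p,\tilde q,\tilde r)$ matches \eqref{eq:f322} by construction.

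The heart of the proof is drift matching. The total drift of $d(K_tX_t)$ gathers four contributions: $[-G_t-Q_t+\hat B_t\hat N_t^{-1}\hat B_t^*]X_{t-}$ from the BSRDEJ \eqref{eq:4.9}, $K_{t-}(A_tX_{t-}+B_tu_t)$ from $dX$, the Brownian covariation drift, and the $\nu$-compensator drift from the jump covariation. The key algebraic move is to substitute $(\tilde p,\tilde q,\tilde r)$ into the dual representation \eqref{eq:b311}; using the positive definiteness of $\hat N_t$ and the definitions \eqref{eq:f347}--\eqref{eq:4.12} of $\hat B_t$ and $\hat N_t$, this collapses to the state feedback
$$u_t=-\hat N_t^{-1}\hat B_t^*\,X_{t-},$$
so that the quadratic term $\hat B_t\hat N_t^{-1}\hat B_t^*X_{t-}$ in the BSRDEJ drift becomes $-\hat B_t u_t$. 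Expanding $-G_t X_{t-}$ and $-\hat B_t u_t$ term by term, the $KBu$, $L^iD^iu$, and $\int H F u\,\nu(d\theta)$ cross-terms telescope against the corresponding contributions from $K_{t-}(Bu)$, the Brownian covariation and the compensator drift, while the remaining $KA$, $C^{i*}L^i$, $C^{i*}KC^i$, $C^{i*}KD^i u$, $\int E^*H\,\nu$, $\int E^*KE\,\nu$, $\int E^*HE\,\nu$, $\int E^*KF u\,\nu$, and $\int E^*HFu\,\nu$ contributions regroup into exactly $-A^*_t\tilde p_t-\sum_i C^{i*}_t\tilde q^i_t-\int_Z E^*_t(\theta)\tilde r_t(\theta)\,\nu(d\theta)-2Q_tX_t$, which is the adjoint drift of \eqref{eq:f322}.

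The main obstacle is precisely this bookkeeping: contributions naturally partition into three blocks (the $A/Q$-block, the Brownian $C^i/D^i/L^i$-block, and the $\nu$-integral $E/F/H$-block), and within each block one must pair off terms coming from $-G_t$, from the jump compensator, and from the feedback-induced $-\hat B_t u_t$ with the target terms of the adjoint drift. Once this matching is completed, the uniqueness part of Lemma \ref{lem:b2} closes the argument and produces \eqref{eq:4.13}.
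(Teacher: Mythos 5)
Your plan --- apply the It\^o product rule to $K_tX_t$, read off the $dW^i$ and $\tilde\mu(d\theta,dt)$ integrands to obtain the candidates $\tilde q^i$ and $\tilde r$, match the drifts, and conclude by uniqueness of the adjoint BSDE --- is exactly the route the paper takes (its proof is a two-line sketch saying precisely this), and your identification of the martingale integrands is correct. But the drift-matching step, which you rightly call the heart of the proof, has a genuine gap: it is circular as written. The dual representation \eqref{eq:b311} is an identity satisfied by the \emph{true} adjoint triple $(p,q,r)$ coming from the Hamilton system \eqref{eq:f333}; you may not substitute the \emph{candidate} $(\tilde p,\tilde q,\tilde r)$ into it to deduce the feedback form $u_t=-\hat N_t^{-1}\hat B_t^*X_{t-}$, because that substitution presupposes $(\tilde p,\tilde q,\tilde r)=(p,q,r)$, which is the conclusion (indeed the feedback form is the content of Theorem \ref{thm:f342}, which the paper derives \emph{from} Theorem \ref{thm:f341}). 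The non-circular version is to set $(\delta p,\delta q,\delta r)=(p-\tilde p,\,q-\tilde q,\,r-\tilde r)$, use \eqref{eq:b311} for the true triple to express the leftover drift term $\hat B_t\bigl(\hat N_t^{-1}\hat B_t^*X_{t-}+u_t\bigr)$ as a bounded linear functional of $(\delta p,\delta q,\delta r)$, and observe that the difference then solves a homogeneous linear BSDE with zero terminal datum, hence vanishes by Lemma \ref{lem:b2}; this requires boundedness of $\hat N^{-1}$, not merely positive definiteness.

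A second, related problem: if you actually execute your verification-plus-uniqueness scheme you will find the terminal conditions do not match, since $\tilde p_T=K_TX_T=MX_T$ while \eqref{eq:f322} imposes $p_T=2MX_T$, and correspondingly the drift of $d(K_tX_t)$ yields $-Q_tX_t$ where the adjoint drift carries $-2Q_tX_t$; so Lemma \ref{lem:b2} cannot identify the two triples as stated. This factor of $2$ is an inconsistency in the paper's own normalization (the adjoint equation inherits the $2$'s from differentiating the quadratic cost, while $K_T=M$ does not); the consistent identities are $p_t=2K_tX_t$ with the same factor $2$ on $q^i$ and $r$, or equivalently one should renormalize \eqref{eq:f322} without the $2$'s. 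A careful run of your own bookkeeping would have exposed both issues, so the claim that ``the martingale structure matches by construction'' should not be asserted without first checking the terminal value and without a non-circular treatment of the control term.
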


   \begin{proof}

    Use It\^{o} formula to compute $K_tx_t$ and compare it with $p_t.$
 The identification of the integrands of Lebesgue  and It\^{o}'s
 integrals yields the desired relation \eqref{eq:4.13}.
   \end{proof}

  Now we give the state feedback representation of optimal control $u$.

\begin{thm}\label{thm:f342}
Let Assumptions \ref{ass:1.1}--\ref{ass:1.2} hold. Let $(u,
X)$ be the optimal pair of the stochastic LQ Problem \ref{pro:2.1}.  Assume that
 $(K, L, H)\in {\cal S}_{\cal F}^2(0, T; {\cal S}^n)\times {\cal L}
_{\cal F}^2(0, T; ({\cal S}^n)^d)
 \times {\cal L}_{\cal F}^{\nu, 2}([0, T]\times Z; {\cal S}^n)$
is the solution  to BSRDEJ \eqref{eq:4.9} and the  matrix-valued
process
$\hat{N}$(noting \eqref{eq:4.12}) is  a.e.a.s.  positive definite. Then $u$ has the following state feedback
representation
\begin{eqnarray}
\begin{split}\label{eq:4.14}
    \displaystyle u_t&=\bigg[N_t+\displaystyle\sum_{i=1}^dD_t^{i*}K_{t-}D_t^i+\int_{ Z}
  F_t^*(\theta)K_{t-}F_t(\theta)\nu(d\theta)\\
  &~~~+\int_{ Z} F_t^*(\theta)H_t(\theta)F_t(\theta)
\nu (d\theta)\bigg]^{-1}\bigg[B_t^*K_{t-}+
 \displaystyle\sum_{i=1}^d D_t^{i*}K_{t-}C_t^i\\&~~~+
\displaystyle\sum_{i=1}^dD_t^{i*}L_t^i+\int_{ Z} F_t^*(\theta)K_{t-}(\theta)E_t(\theta)
\nu (d\theta)+\int_{ Z}
  F_t^*(\theta)H_t(\theta)\nu (d\theta)
  \\&~~~+\int_{ Z}F_t^*(\theta)H_t
(\theta)E_t(\theta))\nu(d\theta)\bigg]X_{t-},  ~~~a.e.a.s..
\end{split}
\end{eqnarray}
Moreover, the following relation holds
 $$
 \displaystyle \inf_{u\in \mathscr {U}}J(u)=E\langle K_0x, x\rangle .
 $$
   \end{thm}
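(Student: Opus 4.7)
The plan is to combine Theorem \ref{thm:f341} with the optimality characterization from Corollary \ref{cor:22}, and then to verify the cost identity by applying It\^o's formula to the scalar process $\langle K_t X_t, X_t\rangle$ and completing the square in the control variable.

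First, for the feedback representation \eqref{eq:4.14}, I would start from the dual expression supplied by Corollary \ref{cor:22},
\[
u_t = -\frac{1}{2}\,N_t^{-1}\bigg[B_t^{*} p_{t-} + \sum_{i=1}^d D_t^{i*} q_t^{i} + \int_{Z} F_t^{*}(\theta) r_t(\theta)\,\nu(d\theta)\bigg],
\]
and substitute the identifications \eqref{eq:4.13} for $p_{t-}$, $q_t^{i}$, and $r_t(\theta)$. The right-hand side then splits into a piece linear in $X_{t-}$ and a piece linear in $u_t$; collecting the $u_t$-coefficient reassembles exactly the matrix $\hat N_t$ of \eqref{eq:4.12}, while the $X_{t-}$-coefficient matches the bracket in \eqref{eq:4.14}. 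Since $\hat N_t$ is a.e.a.s.\ positive definite by assumption, I invert it and solve for $u_t$, obtaining \eqref{eq:4.14}.

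For the value identity I would work instead with an arbitrary admissible pair $(u,X)$ and apply It\^o's formula to $\langle K_t X_t, X_t\rangle$, using the state dynamics \eqref{eq:1.1} for $X$, the BSRDEJ \eqref{eq:4.9} for $K$, and the It\^o--Ventzell cross-variation and jump corrections between them. The generator $-G_t - Q_t + \hat B_t \hat N_t^{-1}\hat B_t^{*}$ is designed precisely so that, after grouping all resulting drift terms and completing the square in $u_t$, one arrives at
\[
d\langle K_t X_t, X_t\rangle = -\big[\langle Q_t X_t, X_t\rangle + \langle N_t u_t, u_t\rangle\big]\,dt + \langle \hat N_t(u_t - \bar u_t), u_t - \bar u_t\rangle\,dt + dM_t,
\]
where $\bar u_t := -\hat N_t^{-1}\hat B_t^{*} X_{t-}$ is the feedback candidate of \eqref{eq:4.14} along the trajectory of $X$, and $M$ is a local martingale built from the $dW^i$ and $\tilde\mu(d\theta,dt)$ integrals. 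Integrating from $0$ to $T$, using $K_T = M$, and taking expectations (the stochastic integrals have zero mean thanks to the $S_{\cal F}^2$, ${\cal L}_{\cal F}^2$, and ${\cal L}_{\cal F}^{\nu,2}$ memberships of $(K,L,H,X)$ together with the a priori estimate \eqref{eq:f334}), I obtain
\[
J(u) = E\langle K_0 x, x\rangle + E\int_0^T \langle \hat N_t(u_t - \bar u_t), u_t - \bar u_t\rangle\,dt.
\]
Since $\hat N_t$ is positive definite, the second term is nonnegative, so $\inf_{u\in {\cal A}} J(u)\geq E\langle K_0x, x\rangle$. For the optimal pair (which exists and is unique by Theorem \ref{them:b1}), the reverse inequality forces $u_t = \bar u_t$ a.e.a.s., simultaneously reproving \eqref{eq:4.14} and establishing the value formula.

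The main obstacle is the It\^o--Ventzell bookkeeping for $\langle K_t X_t, X_t\rangle$: one must carefully combine the Brownian cross-terms between $L_t^i$ (from $dK_t$) and $C_t^i X_t + D_t^i u_t$ (from $dX_t$), the non-trivial quadratic jump contributions involving $H_t(\theta)$, $K_{t-}$, and the jumps $E_t(\theta)X_{t-} + F_t(\theta)u_t$ of $X$, and the drift contributions, and then verify that they assemble exactly into $G_t$, $\hat B_t$, and $\hat N_t$ as defined in \eqref{eq:f347}--\eqref{eq:4.12}. Once this algebra is laid out and matched against the generator of the Riccati equation, the completion of squares and the final identification of the optimal cost are essentially mechanical.
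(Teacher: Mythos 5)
Your derivation of the feedback formula \eqref{eq:4.14} is exactly the paper's: substitute the identifications \eqref{eq:4.13} into the first-order condition \eqref{eq:b23}, observe that the coefficient of $u_t$ reassembles $\hat N_t$, and invert. For the value identity the two arguments diverge. The paper stays inside the Hamilton-system picture: it quotes the energy identity \eqref{eq:b29} (already obtained from It\^{o}'s formula applied to $\langle p_t, X_t\rangle$) together with the first identification $p_0=K_0X_0=K_0x$ from \eqref{eq:4.13}, and reads off the value in two lines. You instead run the classical completion-of-squares argument on $\langle K_tX_t,X_t\rangle$ for an \emph{arbitrary} admissible $u$, arriving at $J(u)=E\langle K_0x,x\rangle+E\int_0^T\langle \hat N_t(u_t-\bar u_t),u_t-\bar u_t\rangle\,dt$. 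Your route is heavier --- the Brownian and jump cross-terms must be matched against $G$, $\hat B$, $\hat N$ of \eqref{eq:f347}--\eqref{eq:4.12}, though ordinary It\^{o} calculus for products of semimartingales suffices here, It\^{o}--Ventzell is not needed --- but it buys a genuine verification theorem ($J(u)\ge E\langle K_0x,x\rangle$ for \emph{every} admissible control) and is internally consistent in its normalization, whereas the paper's shortcut carries a factor-of-two mismatch: \eqref{eq:b29} actually gives $2J(u)=E\langle p_0,x\rangle$, and $p_T=2MX_T$ while $K_T=M$, so $p=KX$ and $\inf J=E\langle K_0x,x\rangle$ cannot both hold literally as written. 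One caveat on your closing step: the ``reverse inequality'' $\inf J\le E\langle K_0x,x\rangle$ is not free, since it requires the feedback control $\bar u_t=-\hat N_t^{-1}\hat B_t^{*}X_{t-}$ to be admissible, and $\hat N^{-1}\hat B^{*}$ is only assumed measurable, not bounded. You do not actually need it: having already established \eqref{eq:4.14} for the optimal $u$ in your first paragraph, the quadratic remainder vanishes for that $u$ and the value formula follows directly from your identity.
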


\begin{proof}
Putting into the relationship \eqref{eq:4.13} into the dual representation  \eqref{eq:b23}, we get
the state feedback representation
\eqref{eq:4.14}. Since $(u,X)$ is the optimal pair, combining the relationship
\eqref{eq:b29} and the first relationship  in  \eqref{eq:4.13}, we
 get

\begin{eqnarray}\label{eq:c11}
     \begin{split}
\displaystyle \inf_{u\in \mathscr
{U}}J(u)&=2E\langle Mx_T, x_T\rangle+2E\int_0^T\langle N_tu_t, u_t\rangle dt+2E\int_0^T\langle Q_tx_t, x_t\rangle dt\\
&=E\langle P_0, x\rangle =E\langle K_0x, x\rangle .
     \end{split}
   \end{eqnarray}

The proof is complete.

\begin{rmk}
  Formula \eqref{eq:4.14} provides a characterization
  of the optimal control in the terms of the solution to
  BSRDEJ \eqref{eq:4.9}. BSRDEJ \eqref{eq:4.9} is not
  a coupled equation, and this characterization is
  preferred to \eqref{eq:f333}.
\end{rmk}

\begin{rmk}
Putting \eqref{eq:4.14} into  the second equality and the third equality of \eqref{eq:4.13},
we have
 \begin{eqnarray*}
   \begin{split}
     q^i_t&=(L_t^i+K_{t-}C^i_t)X_{t-}+K_{t-}D^i_t\bigg[N_t+\displaystyle\sum_{i=1}^dD_t^{i*}K_{t-}D_t^i+\int_{ Z}
  F_t^*(\theta)K_{t-}F_t(\theta)\nu(d\theta)\\
  &~~~+\int_{ Z} F_t^*(\theta)H_t(\theta)F_t(\theta)
\nu (d\theta)\bigg]^{-1}\bigg[B_t^*K_{t-}+
 \displaystyle\sum_{i=1}^d D_t^{i*}K_{t-}C_t^i\\&~~~+
\displaystyle\sum_{i=1}^dD_t^{i*}L_t^i+\int_{ Z} F_t^*(\theta)K_{t-}(\theta)E_t(\theta)
\nu (d\theta)+\int_{ Z}
  F_t^*(\theta)H_t(\theta)\nu (d\theta)
  \\&~~~+\int_{ Z}F_t^*(\theta)H_t
(\theta)E_t(\theta))\nu(d\theta)\bigg]X_{t-},  ~~~~i=1, 2, \cdots, d;\\
     r_t(\theta)&=\bigg(H_t(\theta)+K_{t-}E_t(\theta)+H_t(\theta)E_t(\theta)\bigg)X_{t-}\\
     &~~~~
    +\bigg(K_{t-}F_t(\theta)
    +H_t(\theta)F_t(\theta)\bigg)\bigg[N_t+\displaystyle\sum_{i=1}^dD_t^{i*}K_{t-}D_t^i+\int_{ Z}
  F_t^*(\theta)K_{t-}F_t(\theta)\nu(d\theta)\\
  &~~~+\int_{ Z} F_t^*(\theta)H_t(\theta)F_t(\theta)
\nu (d\theta)\bigg]^{-1}\bigg[B_t^*K_{t-}+
 \displaystyle\sum_{i=1}^d D_t^{i*}K_{t-}C_t^i+
\displaystyle\sum_{i=1}^dD_t^{i*}L_t^i\\&~~~+\int_{ Z} F_t^*(\theta)K_{t-}(\theta)E_t(\theta)
\nu (d\theta)+\int_{ Z}
  F_t^*(\theta)H_t(\theta)\nu (d\theta)
  \\&~~~+\int_{ Z}F_t^*(\theta)H_t
(\theta)E_t(\theta))\nu(d\theta)\bigg]X_{t-},~~t\in [0, T],~~\theta\in Z.
   \end{split}
 \end{eqnarray*}

\end{rmk}

\subsection{ Existence and uniqueness of BSRDE with jump}
From Theorem \ref{thm:f342},
we know that the optimal control $u$ of the stochastic LQ Problem
\ref{pro:2.1}
can be expressed by the solution $(K, L, H)$ to the BSRDEJ \eqref{eq:4.9}.
Therefore,  solving stochastic LQ Problem \ref{pro:2.1} is equivalent
to solving the  BSRDEJ \eqref{eq:4.9}.
But the  BSRDEJ \eqref{eq:4.9} is
a high order nonlinear backward stochastic differential equation
with jumps.  And the general theory of BSDE (see lemma \ref{lem:b2}) can be not applied to use to
 guarantee the  existence and uniqueness of its solution.
 Moreover, different from the BSRDE driven only by
 Brownian motion (see Tang \cite{Tang2003}), the  BSRDEJ \eqref{eq:4.9} has also a notable characteristic:   the nonlinear term
$\hat{N}_t^{-1}=(N_t+D_t^{i*}K_tD_t^i+\displaystyle\int_{ Z}
 F_t^*(\theta)K_tF_t(\theta)\nu(d\theta)+\displaystyle\int_{ Z}
 F_t(\theta)H_t(\theta)F_t(\theta)\nu(d\theta))^{-1}$  contains not only the first unknown
 element $K$, but also the third unknown element $H$. For the  BSRDE
 driven by only Brownian motion, the nonlinear term
 $\hat{N}_t^{-1}$ is degenerated  into
 $(N_t+D_t^{i*}K_tD_t^i)^{-1}$ which only contain the first unknown element
$K_t.$  In \cite{Tang2003}, we can proof the   $K_t$ is
non-negative matrix, so
$(N_t+D_t^{i*}K_tD_t^i)^{-1}$ is well defined.  For the second unknown element
$L$, we can only  show it's square integrability, but we can not show if it is a  non-negative matrix.
 So for the BSRDEJ \eqref{eq:4.9}, how to guarantee
$(N_t+D_t^{i*}K_tD_t^i+\displaystyle\int_{ Z}
 F_t^*(\theta)K_tF_t(\theta)\nu(d\theta)+
\displaystyle\int_{ Z}
 F_t^*(\theta)H_t(\theta)F_t(\theta)\nu(d\theta))^{-1}$ to be well-defined
 is posed to be a challenging problem.

 In this paper, we  show the existence and uniqueness  result only
 for the case where the generator is a bounded linear dependence with respect to
  the second unknown element $ L $ and the third unknown element $ H $.
For the general case,
we will to continue the discussion in future research .

Now we give the further assumptions on the coefficients of  stochastic system
\eqref{eq:1.1}. Assume that the coefficients
\begin{eqnarray*}
  \begin{split}
  C=&(C^1, \cdots,  C^d)=(C^{11}, \cdots, C^{1d_1},  C^{21}\cdots,  C^{2d_2}),\\
  D=&(D^1, \cdots,  D^d)=(D^{11}, \cdots, D^{1d_1},  0\cdots,  0),  \\
  F=&0,
\end{split}
\end{eqnarray*}
where $d_1+d_2=d.$

In this case the stochastic system \eqref{eq:1.1} is reduced to the following form
\begin{equation}\label{eq:f354}
\left\{\begin {array}{ll}

  dX_t=&(A_tX_t+B_tu_t)dt+\displaystyle\sum_{i=1}^{d_1}C_t^{1i}X_tdW^{1i}_t
+\displaystyle\sum_{i=1}^{d_1}(C_t^{2i}X_t+D_t^{2i}u_t)dW^{2i}_t
  \\&+\displaystyle
  \int_{ Z}E_t(\theta)X_{t-}\tilde{\mu}(d\theta,
  dt)
   \\x_0~=&x,

\end {array}
\right. \end{equation}  Denote by $\{{\cal F}_t^*\}_{t\geqslant
0}$    the P-augmentation of the natural $\sigma$-filtration which is generated by Brownian motion
$(W^{11}, \cdots,
W^{1d_1})$ and Poisson random martingale measure $\tilde{\mu}(d\theta,
dt)$.  In the following we give the further assumptions on adaption of the
 coefficients of the stochastic LQ problem. ÏÂ
\begin{ass}\label{ass:f343}
Assume that  $A,  B,  C,  D,  E, Q,  N$ are uniforming  bounded $\{{\cal
F}_t^*, 0\leq t\leq T\}$-predictable matrix-valued processes. And
the random matrix $M$ is bounded ${\cal F}_T^*$-measurable.
\end{ass}

Under Assumption \ref{ass:f343}, again by  dynamic programming principle and It\^{o}-Ventzell
formulation, BSRDEJ \eqref{eq:4.9} is reduced to the
following form
 \begin{eqnarray}\label{eq:4.17}
    \begin{split}
dK_t=&-\bigg[K_{t}A_t+A^*_tK_{t}+\displaystyle\sum_{i=1}^{d_1}C^{1i*}_tL^{1i}_t
+\displaystyle\sum_{i=1}^{d_1}L^{1i}_tC^{1i}_t+\displaystyle\sum_{i=1}^{d_1}C^{1i*}_tK_{t}C^{1i}_t\\
&+\displaystyle\sum_{i=1}^{d_2}C^{2i*}_tK_{t}C^{2i}_t+\int_ ZH_t(\theta)E_t(\theta)\nu(d\theta)
+\int_ Z
E^{*}_t(\theta)H_t(\theta)\nu(d\theta)\\
&+\int_ Z
E^{*}_t(\theta)K_{t}E_t(\theta)\nu(d\theta)+\int_ Z
E^{*}_t(\theta)H_t(\theta)E_t(\theta)\nu(d\theta)\\
&
+Q_t-\hat{B}(t, K_t)\hat {N}^{-1}(t, K_t)\hat{B}^*(t, K_t)\bigg]dt\\
&+\displaystyle\sum_{i=1}^{d_1}L^{i}_tdW^{1i}_t+\int_ ZH_t(\theta)\tilde{\mu}(d\theta,
dt),
    \end{split}
  \end{eqnarray}
where for $\forall K\in {\cal S}^n,$  we define
\begin{eqnarray}\label{eq:4.18}
    \begin{split}
\hat{B}(t, K):=&KB_t+\displaystyle\sum_{i=1}^{d_2}C^{2i}_tKD^{2i*}, \\
\hat{N}(t, K):=&N_t+\displaystyle\sum_{i=1}^{d_2}D^{2i*}KD^{2i}.
\end{split}
  \end{eqnarray}
In the following we state the existence and uniqueness  result of the solution
BSRDEJ \eqref{eq:4.17}.
\begin{thm}\label{thm:f344}
  Let Assumption \ref{ass:1.2} and Assumption \ref{ass:f343} hold.
  Then BSRDEJ \eqref{eq:4.17}  has a unique solution
  $(K, L, H)\in {\cal S}_{\cal F}^2(0, T; \cal{S}^n)\times {\cal L}_{\cal F}^2(0, T;({\cal S}^n)^{d_1})\times
  {\cal L}_{\cal F}^{\nu, 2}([0, T]\times Z; {\cal S}^n).
 $ Moreover, $K$ is uniformly bounded and nonnegative a.s.a.e..
\end{thm}

In order to show the theorem, we need the following two lemmas.

Consider the following linear BSDE
\begin{equation}\label{eq:4.19}
\left\{\begin {array}{ll}
-d\hat{K}_t=&\bigg[\hat{K}_t\hat{A}_t+\hat{A}^*_t\hat{K}_t
+\displaystyle\sum_{i=1}^{d_1}\hat{L}^{1i}_t\hat{C}^{1i}_t
+\displaystyle\sum_{i=1}^{d_1}\hat{C}^{1i*}_t\hat{L}^{1i}_t
+\displaystyle\sum_{i=1}^{d_1}\hat{C}^{1i*}_t\hat{K}_{t}\hat{C}^{1i}_t\\
&+\displaystyle\sum_{i=1}^{d_2}\hat{C}^{2i*}_t\hat{K}_{t}\hat{C}^{2i}_t
+\displaystyle\int_{ Z}\hat{H}_t(\theta)\hat{E}_t(\theta)\nu(d\theta)
+\displaystyle\int_{ Z}
\hat{E}^{*}_t(\theta)\hat{H}_t(\theta)\nu(d\theta)\\
&+\displaystyle\int_{ Z}
\hat{E}^{*}_t(\theta)\hat{K}_{t}\hat{E}_t(\theta)\nu(d\theta)
+\displaystyle\int_{ Z}
\hat{E}^{*}_t(\theta)\hat{H}_t(\theta)\hat{E}_t(\theta)\nu(d\theta)
+\hat{Q}_t\bigg]dt\\
&-\displaystyle\sum_{i=1}^{d_1}{\hat L}^{1i}_tdW^{1i}_t
-\displaystyle\int_ Z{\hat H}_t(\theta)\tilde{\mu}(d\theta, dt), \\
~~~~~\hat{K}_T=&\hat{M}.
\end{array}
\right.
\end{equation}

\begin{lem}\label{lem:f345}
  Let $\hat{A}, \hat{C}^{1i}( i=1, 2, \cdots, d_1), \hat{C}^{2i} (i=1, 2, \cdots, d_2),\hat{E}$ be $R^{n\times n}$-valued, and  $\hat{Q}$ be
$S^n$-valued, uniformly bounded $\{{\cal F}_t^*, 0\leq t\leq T\}$-predictable process. Let $\hat{M}$ be ${\cal
S}^n$-valued bounded ${\cal F}_T^*$-measurable random variable.
Then BSDE \eqref{eq:4.19} has unique solution $(\hat K, \hat L, \hat H)\in {\cal S}_{{\cal F}^*}^2(0,
T; \cal{S}^n)\times {\cal L}_{{\cal F}^*}^2(0, T;({\cal
S}^n)^{d_1})\times
  {\cal L}_{{\cal F}^*}^{\nu, 2}([0, T]\times Z; {\cal S}^n).
 $ Moreover,
\begin{equation}\label{eq:f358}
    \displaystyle \sup_{(t, \omega)\in[0, T]\times \Omega}|\hat{K}_t(\omega)|^2\leqslant \kappa_0<+\infty,
   \end{equation}
where $k_0$ depends  on
\begin{equation}
\displaystyle \sup_{\omega}
\bigg(|\hat{M}(\omega)|^2+\displaystyle\int_0^T|\hat{Q}_t(\omega)|^2dt\bigg).
\end{equation}
If $\hat{Q}$ and $M$ are nonnegative a.s.a.e, then
$\hat{K}$ is also nonnegative a.s.a.e..

\end{lem}
\begin{proof}

  According to Lemma \ref{lem:b2},  the existence and uniqueness as
well as the inequality \eqref{eq:f358} can be obtained directly.
It remains to prove the nonnegativity of
$\hat{K}.$ For any given $(t,x)\in [0, T]\times R^n $,  we introduce the following linear
SDE:
\begin{eqnarray}\label{eq:f359}
\begin{split}
 dy_s=&\hat{A}_sy_sds+\displaystyle\sum_{i=1}^{d_1}\hat{C}^{1i}_sy_sdW^{1i}_s+\displaystyle\sum_{i=1}^{d_2}
\hat{C}^{2i}_sy_sdW^{2i}_s\\
& +\int_ Z\hat{E}_s(\theta)y_{s-}\tilde{\mu}(d\theta, ds),
~~~y_t=x,  ~~t\leqslant s\leqslant T.
\end{split}
\end{eqnarray}
From Lemma \ref{lem:b1}, SDE \eqref{eq:f359} has a unique strong
solution $y.$  Applying It\^{o}'s formula to $\langle \hat{K}_sy_s, y_s\rangle $ we have
  \begin{eqnarray}\label{eq:f360}
  \begin{split}
  d\langle \hat{K}_s&y_s, y_s\rangle =-\langle \hat{Q}_sy_s,  y_s\rangle ds
   +\displaystyle\sum_{i=1}^{d_1}  \langle y_s, (\hat{L}^{1i}_s+\hat{K}_s\hat{C}^{1i}_s
  +\hat{C}^{1i*}_s\hat{K}_s)y_s  \rangle dW^{1i}_s
  \\&+\displaystyle\sum_{i=1}^{d_1}  \langle y_s, (\hat{K}_s\hat{C}^{2i}_s
   +\hat{C}^{2i*}_s\hat{K}_s)y_s  \rangle dW^{2i}_s\\
   &+\int_ Z  \langle y_{s-}, \big(\hat{H}_s(\theta)
   +\hat{K}_{s-}\hat{E}_s(\theta)+\hat{E}^{*}_s(\theta)\hat{K}_{s-}
   +\hat{H}_s(\theta)\hat{E}_s(\theta)\big)y_{s-}  \rangle \tilde{\mu}(d\theta, ds)\\
  &+\int_ Z  \langle  y_{s-}, \hat{E}^{*}_s(\theta)(\hat{H}_s(\theta)
   +\hat{K}_{s-}\hat{E}_s(\theta)+\hat{H}_s(\theta)\hat{E}_s(\theta))y_{s-}  \rangle \tilde{\mu}(d\theta, ds),
    \end{split}
    \end{eqnarray}
Thus, taking conditional expectation, we get
    \begin{equation}\label{eq:f361}
      \langle \hat{K}_tx, x\rangle =E^{{\cal
F}^*_t}\bigg[\int_t^T\langle \hat{Q}_sy_s, y_s\rangle ds+\langle \hat{M}y_T, y_T\rangle \bigg].
    \end{equation}

     Since $\hat{Q}$ and $\hat{M}$ are nonnegative a.s.a.e., from
\eqref{eq:f361}, we conclude that
   $\hat{K}$ is nonnegative a.s.a.e..
\end{proof}
 Define the mapping
 $F:[0, T]\times  ({\cal S}^n)_+\times({\cal
S}^n)^{d_1}\times {\cal L}^{\nu, 2}( Z;{\cal S}^n)\times
R^{m\times n}\rightarrow {\cal S}^n $ by

  \begin{eqnarray}
    \begin{split}
F(t, K,  L^1,  H,
U)=&(A_t-B_tU)^*K+K(A_t-B_tU)+\displaystyle\sum_{i=1}^{d_1}C^{1i*}_tL^{1i}
+\displaystyle\sum_{i=1}^{d_1}L^{1i}C^{1i}_t\\
&+\displaystyle\sum_{i=1}^{d_1}C^{1i*}_tKC^{1i}_t+\int_ ZH(\theta)E_t(\theta)\nu(d\theta)
+\int_ Z
E^{*}_t(\theta)H(\theta)\nu(d\theta)\\
&+\int_ Z
E^{*}_t(\theta)KE_t(\theta)\nu(d\theta)+\int_ Z
E^{*}_t(\theta)H(\theta)E_t(\theta)\nu(d\theta)\\
&+\displaystyle\sum_{i=1}^{d_2}(C^{2i}_t-D^{2i}_tU)^*K(C^{2i}_t-D^{2i}_tU).\\
    \end{split}
  \end{eqnarray}
 By notation \eqref{eq:4.18}, define the mapping $\hat{U}:[0, T]\times({\cal S}^n)^+\rightarrow R^{m\times
  n}$ by
$$\hat{U}(t, K)=\hat{N}^{-1}(t, K)\hat{B}^*(t, K).$$

\begin{lem}\label{lem:f346} Let $(K, L^1, H)\in ({\cal S}^n)_+\times ({\cal S}^n)^{d_1}\times {\cal L}^{\nu, 2}( Z;{\cal S}^n)$. Then, for $\forall U\in R^{m\times n}$,
we have
\begin{equation}
F(t, K, L^{1}, H, U)+{U}^*{N}_t{U}\geqslant F(t, K, L^{1}, H, \hat{U}(t,
K))+\hat{U}^*(t, K){N}_t\hat{U}(t, K),~~~0\leq t\leq T.
\end{equation}
\end{lem}

\begin{proof}
  By the definition of $F(t, K, L^{1}, H, U), \hat{B}(t, K), \hat{N}(t, K) $ and $\hat{U}(t, K)$, it follows that
\begin{eqnarray}
\begin{split}
F(t, K, L^{1}, H, U)&+{U}^*{N}_t{U}=-U^*\hat{B}^*(t, K) -\hat{B}(t,
K)U+U^*\hat{N}(t, K)U
\\&+A_t^*K+KA_t+\displaystyle\sum_{i=1}^{d_1}C^{1i*}_tL^{1i}
+\displaystyle\sum_{i=1}^{d_1}L^{1i}C^{1i}_t\\
&+\displaystyle\sum_{i=1}^{d_1}C^{1i*}_tKC^{1i}_t+\int_ ZH(\theta)E_t(\theta)\nu(d\theta)
+\int_ Z
E^{*}_t(\theta)H(\theta)\nu(d\theta)\\
&+\int_ Z
E^{*}_t(\theta)KE_t(\theta)\nu(d\theta)+\int_ Z
E^{*}_t(\theta)H(\theta)E_t(\theta)\nu(d\theta)\\
&+\displaystyle\sum_{i=1}^{d_2}C^{2i*}_tKC^{2i}_t,
\end{split}
 \end{eqnarray}
 and
  \begin{eqnarray}
    \begin{split}
   F(t, K, L^{1}, H, \hat{U}(t, K))&+\hat{U}^*(t, K){N}_t\hat{U}(t,K)=
-\hat{U}^*(t, K)\hat{N}(t, K)\hat{U}(t, K)
\\&+A_t^*K+KA_t+\displaystyle\sum_{i=1}^{d_1}C^{1i*}_tL^{1i}
+\displaystyle\sum_{i=1}^{d_1}L^{1i}C^{1i}_t\\
&+\displaystyle\sum_{i=1}^{d_1}C^{1i*}_tKC^{1i}_t+\int_ ZH(\theta)E_t(\theta)\nu(d\theta)
+\int_ Z
E^{*}_t(\theta)H(\theta)\nu(d\theta)\\
&+\int_ Z
E^{*}_t(\theta)KE_t(\theta)\nu(d\theta)+\int_ Z
E^{*}_t(\theta)H(\theta)E_t(\theta)\nu(d\theta)\\
&+\displaystyle\sum_{i=1}^{d_2}C^{2i*}_tKC^{2i}_t
.
    \end{split}
  \end{eqnarray}
Therefore,
  \begin{eqnarray}\label{eq:f366}
\begin{split}
&~~~F(t, K, L^{1}, H, U)+{U}^*{N}_t{U}- F(K, L^{1}, H, \hat{U}(t,
K))-\hat{U}^*(t, K){N}_t\hat{U}(t, K)
\\&=-U^*\hat{B}^*(t, K)
-\hat{B}(t, K)U+U^*\hat{N}(t, K)U+\hat{U}^*(t, K)\hat{N}(t, K)\hat{U}(t, K)\\
&=-U^*\hat{N}(t, K)\hat{U}(t, K)
-\hat{U}^*(t, K)\hat{N}_tU+U^*\hat{N}(t, K)U+\hat{U}^*(t, K)\hat{N}_t\hat{U}(t, K)\\
&=(\hat{U}(t, K)-U)^*\hat{N}(t, K)(\hat{U}(t, K)-U)\geqslant 0.
\end{split}
  \end{eqnarray}
  The proof is complete.
\end{proof}
In the following we will use the  Bellman¡¯s principle of
quasi-linearization and a monotone convergence result of symmetric matrices (see \cite{WONH68}) to show Theorem\ref{thm:f344}.

\emph{Existence}£º By the definition of $F(t, K,  L^1,  H, U)$, BSRDEJ \eqref{eq:4.17} can be rewritten as the following
 quasi-linearization BSDE
\begin{eqnarray}\label{eq:f367}
   \begin{split}
-dK_t=&\bigg[F(t, K_t, L^1_t, H_t, \hat{U}(t, K_t))
+\hat{U}^*(t, K_t){N}_t\hat{U}(t, K_t)+Q_t\bigg]dt\\
& -\displaystyle\sum_{i=1}^{d_1}L^{1i}_tdW^{1i}_t-\int_ ZH_t(\theta)\tilde{\mu}(d\theta,
dt), ~~~K_T=M.
\end{split}
  \end{eqnarray}
Making use of  Eq.\eqref{eq:f367}, we will iteratively construct
 a sequence $\{( K_{j}, L^1_{j}, H_{j})\}_{j=1}^{\infty}$of approximating solutions of BSRDEJ \eqref{eq:4.17}.
 In fact, by Lemma \ref{lem:f345}, we set  $(K_0, L_0^1, H_0)=(0,0,0)$ and solve iteratively the following
 linear BSDE:

 \begin{equation}\label{eq:f368}
\left\{\begin {array}{ll}
-dK_{j+1, t}=&\bigg[F(t, K_{j+1, t}, L^1_{j+1, t}, H_{j+1, t},
\hat{U}(t, K_{j, t}))
+\hat{U}^*(t, K_{j, t})N_t\hat{U}(t, K_{j, t})+Q_t\bigg]dt\\
&-\displaystyle\sum_{i=1}^{d_1}L^{1i}_{j+1, t}dW^{1i}_t
-\int_ ZH_{j+1, t}(\theta)\tilde{\mu}(d\theta, dt), \\
K_{j+1, T}=&M.
\end{array}
\right.
\end{equation}

From Lemma \ref{lem:f345}, it follows that $K_{j}$
is a.e.a.s. bounded and nonnegative.  We also claim that  $\{K_{j+1}\}$ is a.e.a.s. a non-increasing
sequence. Indeed,

\begin{eqnarray}
\begin{split}
&~~~~-d(K_{j, t}-K_{j+1, t})=(-dK_{j, t})-(-dK_{j+1, t})\\
&=\bigg[F(t, K_{j, t}, L_{j, t}^1, H_{j, t}, \hat{U}(t, K_{j-1, t}))
+\hat{U}^*(t, K_{j-1, t})N_t\hat{U}(t, K_{j-1, t}))\\
&~~~~-F(t, K_{j+1, t}, L_{j+1, t}^1, H_{j+1, t}, \hat{U}(t, K_{j,
t}))
-\hat{U}^*(t, K_{j, t})N_t\hat{U}(t, K_{j, t})\bigg]dt\\
&~~~-\displaystyle\sum_{i=1}^{d_1}(L_{j, t}^{1i}-L_{j+1,
t}^{1i})dW_t^{1i}-\int_Z(H_{j, t}(\theta)
-H_{j+1, t}(\theta))\tilde\mu(d\theta, dt)\\
&=\bigg[F(t, K_{j, t}, L_{j, t}^1, H_{j, t}, \hat{U}(t, K_{j, t}))
-F(K_{j+1, t}, L_{j+1, t}^1, H_{j+1, t}, \hat{U}(t, K_{j, t}))\\
&~~~+\bigg(F(t, K_{j, t}, L_{j, t}^1, H_{j, t}, \hat{U}(t, K_{j-1,
t}))
+\hat{U}^*(t, K_{j-1, t})N_t\hat{U}(t, K_{j-1, t}))\\
&~~~-F(t, K_{j, t}, L_{j, t}, H_{j, t}, \hat{U}(t, K_{j, t}))
-\hat{U}^*(t, K_{j, t})N_t\hat{U}(t, K_{j, t})\bigg)\bigg]dt\\
&~~~-\displaystyle\sum_{i=1}^{d_1}(L_{j, t}^{1i}-L_{j+1,t}^{1i})dW_t^{1i}-\int_Z(H_{j, t}(\theta)
-H_{j+1, t}(\theta))\tilde\mu(d\theta, dt)\\
&=\bigg[F(t, K_{j, t}-K_{j+1, t}, L_{j, t}-L_{j+1, t}, H_{j, t}
-H_{j+1, t}, \hat{U}(t, K_{j, t}))\\
&~~~+(\hat{U}(t, K_{j, t})-\hat{U}(t, K_{j-1, t}))^*\hat{N}(t, K_{j,
t})
(\hat{U}(t, K_{j, t})-\hat{U}(t, K_{j-1, t}))\bigg]dt\\
&~~~-\displaystyle\sum_{i=1}^{d_1}(L_{j, t}^{1i}-L_{j+1,
t}^{1i})dW_t^{1i}-\int_Z(H_{j, t}^{1}(\theta) -H_{j+1,
t}^{1}(\theta))\tilde\mu(d\theta, dt),
\end{split}
  \end{eqnarray}
where we  have  used the equality   \eqref{eq:f366} in Lemma \ref{lem:f346}.

Since $(\hat{U}(t, K_{j, t})-\hat{U}(t, K_{j-1, t}))^*\hat{N}(t, K_{j,
t}) (\hat{U}(t, K_{j, t})-\hat{U}(t, K_{j-1,
t}))$ is nonnegative, according to Lemma \ref{lem:f345},
 we conclude that $K_{j,
t}-K_{j+1, t}$ is also nonnegative. This implies $\{K_{j}\}_{j=1}^\infty$ is a non-increasing sequence

$$
CI\geqslant K_{1, t}\geqslant K_{2, t}\geqslant \cdots\geqslant K_{j, t}\geqslant \cdots\geqslant
0,~~~~~t\in [0, T].
$$
It follows that $\{K_{j}\}$ converges almost surely to a nonnegative bounded, $S^n$-
valued process $K.$
According to Lebesgue`s convergence theorem, we have
  \begin{eqnarray}\label{eq:533}
    \begin{split}
\displaystyle\lim_{j\rightarrow\infty}E\int_0^T|K_{j,
t}-K_t|^qdt\rightarrow 0, ~~\forall q>0.
\end{split}
  \end{eqnarray}
Thus $\{K_{j}\}_{j=1}^\infty$ and $\{u(t, K_{j})\}_{j=1}^\infty$ is a Cauchy sequence in the above sense.
Again using Lebesgue`s convergence theorem, for $t\in [0, T],$
we also have

 \begin{eqnarray}\label{eq:534}
    \begin{split}
\displaystyle\lim_{j\rightarrow\infty}E|K_{j, t}-K_t|^q\rightarrow
0, ~~~~\forall q>0.
\end{split}
  \end{eqnarray}

 Applying It\^{o}'s formula  to $|K_{k, t}-K_{j,
t}|^2$, we get
\begin{eqnarray}\label{eq:f371}
\begin{split}
&E|K_{k,0}-K_{j,0}|^2+\sum_{i=1}^{d_1}E\int_0^T|L_{k, t}^{1i}-L_{j, t}^{1i}|^2dt
+E\iint_{ Z\times(0, T] }|H_{k, t}(\theta)-H_{j, t}(\theta)|^2\nu(d\theta)dt\\
=&2E\int_0^Ttr\bigg[(K_{k, t}-K_{j, t})\bigg(C_t^{1i*}(L_{k, t}^{1,
i}-L_{j, t}^{1i})
+(L_{k, t}^{1i}-L_{j, t}^{1i})C_t^{1i}\\
&+\int_ Z E_t^{*}(\theta)(H_{k, t}(\theta)-H_{j,
t}(\theta))\nu(d\theta) +\int_ Z(H_{k, t}(\theta)-H_{j,
t}(\theta))E_t(\theta)\nu(d\theta)
\\&+\int_ Z E_t^{*}(\theta)
(H_{k, t}(\theta)-H_{j, t}(\theta))E_t(\theta)\nu(d\theta)dt+R(j, k)\\
\leqslant &\frac{1}{2} \displaystyle\sum_{i=1}^{d_1} E\int_0^T|L_{k,
t}^{1i}-L_{j, t}^{1i}|^2dt +\frac{1}{2}E\iint_{ Z\times (0,
T]}|H_{k, t}(\theta)
-H_{j, t}(\theta)|^2\nu(d\theta)dt\\
&+CE\int_0^T|K_{k, t}-K_{j, t}|^2dt,
\end{split}
  \end{eqnarray}
where \begin{eqnarray}
  \begin{split}
    R(j, k)= &2\displaystyle\int_0^Ttr\bigg[ (K_{k, t}-K_{j, t})\bigg((K_{k, t}-K_{j, t})A_t^*+ A_t^*(K_{k, t}-K_{j, t})\\&+\displaystyle\sum_{i=1}^{d_2}C^{2i*}_t(K_{k,t}-K_{j,t})C^{2i}_t
    +\hat{U}^*(t, K_{k-1,t})\hat{N}(t, K_{k,t})\hat{U}(t, K_{k-1,t})
    \\&-\hat{U}^*(t, K_{j-1,t})\hat{N}(t, K_{j,t})\hat{U}(t, K_{j-1,t})
    -U^*(t,K_{k-1,t})\hat{B}^*(t, K_{k,t})
    \\&-\hat{B}(t,K_{k,t})U(t,K_{k-1,t})+U^*(t,K_{j-1,t})\hat{B}^*(t, K_{j,t}) +\hat{B}(t,K_{j,t})U(t,K_{j-1,t})\bigg]dt
    \end{split}
\end{eqnarray}
and $C$  is some deterministic positive constant.

Thus from \eqref{eq:533} and \eqref{eq:534}, we know that
 $\{L_{j}^{1}\}_{j=1}^\infty$ and $ \{H_j\}_{j=1}^\infty$ is
 Cauchy sequences in  ${\cal
L}_{\cal F^*}^2((0, T;({S}^n)^{d_1})$ and  ${\cal L}_{\cal F^*}^{\nu,
2}([0, T]\times
 Z;{S}^n)$ respectively. We denote the limits  by $L^1$ and $H$ respectively.
In the end, passing the limit in Eq.\eqref{eq:f368}, we obtain that $(K, L^1, H)$
satisfies Eq.\eqref{eq:f367}. Thus $(K, L^1,
H)$ is the solution of BSRDEJ \eqref{eq:4.17}.

\emph{Uniqueness}

 Suppose that  BSRDEJ \eqref{eq:4.17} has  two solutions $(K, L^1, H)$ and $(\bar{K},  \bar{L}^1,  \bar{H})$  .
 Then it follows from \eqref{eq:f367} that
\begin{eqnarray*}
\begin{split}
-dK_t=&\bigg[F(t, K_t, L^1_t, H_t, \hat{U}(t, K_t))
+\hat{U}^*(t, K_t){N}_t\hat{U}(t, K_t)+Q_t\bigg]dt\\
&~~~~~~~ -\displaystyle\sum_{i=1}^{d_1}L^{1i}_tdW^{1i}_t-\int_ ZH_t(\theta)\tilde{\mu}(d\theta,
dt),
~~K_T=M
\end{split}
  \end{eqnarray*}
and
\begin{eqnarray*}
\begin{split}
-d\bar{K}_t=&\bigg[F(t, \bar{K}_t, \bar{L}^1_t, \bar{H}_t,
\hat{U}(t, \bar{K}_t))
+\hat{U}^*(t, \bar{K}_t){N}_t\hat{U}(t, \bar{K}_t)+Q_t\bigg]dt\\
&~~~~~~~ -\displaystyle\sum_{i=1}^{d_1}\bar{L}^{1i}_tdW^{1i}_t-\int_ Z\bar{H}_t(\theta)\tilde{\mu}(d\theta,
dt),
~~\bar{K}_T=M,
\end{split}
  \end{eqnarray*}
Thus
\begin{eqnarray}
\begin{split}
-d(\bar{K}_t-K_t)=&\bigg[F(t, \bar{K}_t-K_t, \bar{L}_t^1-L_t^1,
\bar{H}_t-H_t, \hat{U}(t, \bar{K}_t))\\
&+F(t, K_t, L^1_t, H_t, \hat{U}(t, \bar{K}_t))
+\hat{U}^*(t, \bar{K}_t){N}_t\hat{U}(t, \bar{K}_t)\\
&-F(t, K_t, L^1_t, H_t, \hat{U}(t, K_t))
-\hat{U}^*(t, K_t)N_t\hat{U}(t, K_t))\bigg]dt\\
&-\displaystyle\sum_{i=1}^{d_1}(\bar{L}_t^{1i}-L_t^{1i})dW_t^{1i}
-\int_ Z(\bar{H}_t(\theta)-H_t(\theta))\tilde{\mu}(d\theta, dt)\\
=&\bigg[F(t, \bar{K}_t-K_t, \bar{L}_t^1-L_t^1,  \bar{H}_t-H_t,
\hat{U}(t, \bar{K}_t))
\\&+(\hat{U}(t, K_t)-\hat{U}(t, \bar{K}_t))^*\hat{N}(t, K_t)(\hat{U}(t, K_t)
-\hat{U}(t, \bar{K}_t))\bigg]dt\\
&-\displaystyle\sum_{i=1}^{d_1}(\bar{L}_t^{1i}-L_t^{1})dW_t^{1i}
-\int_ Z(\bar{H}_t(\theta)-H_t(\theta))\tilde{\mu}(d\theta, dt), \\
~~~~~~~~\bar{K}_T-K_T=0.&
\end{split}
  \end{eqnarray}
Since $(\hat{U}(t, K_t)-\hat{U}(t, \bar{K}_t))^*\hat{N}(t,
K)(\hat{U}(t, K_t) -\hat{U}(t, \bar{K}_t))$ is nonnegative, it follows from Lemma
\ref{lem:f345} that  $\bar{K}-K$ is also a.e. a.s. nonnegative.
Similarly we can obtain that  $\bar{K}-K$ is a.e.a.s. nonnegative.  This implies $K=\bar{K}.$
In the end, from the uniqueness result of  Lemma \ref{lem:f345}, we conclude that $\bar{L}^{1}=L^{1}, \bar{H}=H.$
The uniqueness is proved.

\end{proof}
 { \bf Acknowledgements}
The author is very grateful to Professor Tang shanjian  for his valuable
suggestions and various instruction.


\end{document}